\numberwithin{equation}{section}
\newtheorem{theoreme}{Theorem}
\newtheorem{lemme}{Lemma}[section]
\newtheorem{proposition}[lemme]{Proposition}
\newtheorem{definition}[lemme]{Definition}
\title{{\bf Continuum Percolation for Quermass Model  
}}
\author{
David {\sc Coupier}\\
{\footnotesize Laboratoire Paul Painlevé U.M.R. CNRS 8524}\\
{\footnotesize  Université Lille 1, France}\\ 
{\footnotesize e-mail~:~david.coupier@math.univ-lille1.fr}\\[7mm]
David {\sc Dereudre} \\
{\footnotesize Laboratoire Paul Painlevé U.M.R. CNRS 8524}\\
{\footnotesize  Université Lille 1, France}\\
{\footnotesize e-mail~:~david.dereudre@math.univ-lille1.fr}}
\newcommand{\E}{\ensuremath{{\mathcal{E}}}}
\newcommand{\B}{\ensuremath{{\mathcal{B}}}}
\newcommand{\A}{\ensuremath{{\mathcal{A}}}}
\renewcommand{\S}{\ensuremath{{\mathcal{S}}}}
\newcommand{\F}{\ensuremath{{\mathcal{F}}}}
\newcommand{\LL}{\ensuremath{{\mathcal{L}}}}
\newcommand{\R}{\ensuremath{{\mathbb{R}}}}
\newcommand{\Rd}{\ensuremath{{\mathbb{R}^2}}}
\newcommand{\NN}{\ensuremath{{\mathbb{N}}}}
\renewcommand{\t}{\ensuremath{\theta}}
\newcommand{\x}{\ensuremath{\chi}}
\newcommand{\1}{\ensuremath{\mbox{\rm 1\kern-0.23em I}}}
\renewcommand{\L}{\ensuremath{\Lambda}}
\renewcommand{\l}{\ensuremath{\lambda}}
\renewcommand{\o}{\ensuremath{\omega}}
\renewcommand{\O}{\ensuremath{\Omega}}
\newcommand{\eps}{\ensuremath{\varepsilon}}
\newcommand{\Z}{\ensuremath{{\mathbb{Z}}}}
\begin{document}
\maketitle

\centerline{\textbf{Abstract}} 
The continuum percolation for Markov (or Gibbs) germ-grain models is investigated. The grains are assumed circular with random radii on a compact support. The morphological interaction is the so-called Quermass interaction defined by a linear combination of the classical Minkowski functionals (area, perimeter and Euler-Poincaré characteristic). We show that the percolation occurs for any coefficient of this linear combination and for a large enough activity parameter. An application to the phase transition of the multi-type Quermass model is given.


\vspace{5mm}
\noindent
KEY-WORDS: Stochastic geometry, Gibbs point process, germ-grain model, Quermass interaction, percolation, phase transition.
\newpage

\section{Introduction}

The \textit{germ-grain model} is built by unifying random convex sets-- \textit{the grains} --centered at the points-- \textit{the germs} --of a spatial point process. It is used for modelling random surfaces and interfaces, geometrical structures growing from germs, etc. For such models, the \textit{continuum percolation} refers mainly to the existence of an unbounded connected component. This phenomenon expresses some macroscopic properties of materials as permeability, conductivity, etc. Moreover, it turns out to be an efficient tool to exhibit phase transition in Statistical Mechanics \cite{CCK,GH}. For theses reasons, the continuum percolation has been abundantly studied since the eighties and the pioneer paper of Hall \cite{Hall}.

When the grains are independent and identically distributed, and the germs are given by the locations of a Poisson point process (PPP), the germ-grain model is known as the \textit{Boolean model}. In this context, the continuum percolation is well understood; see the book of Meester and Roy \cite{MR} for a very complete reference. One of the first results is the existence of a percolation threshold $z^*$ for the intensity parameter $z$ of the stationary PPP: provided the mean volume of the grain is finite, percolation occurs for $z>z^*$ and not for $z<z^*$.

Because of the independence properties of the PPP, the Boolean model is sometimes caricatural for the applications in Biology or Physics. Mecke and its coauthors \cite{LMW,M} have mentioned the need of developing, via Markov or Gibbs process, an interacting germ-grain model in which the interaction would locally depend on the geometry of the set. For this purpose, let us cite the Widom-Rowlinson model \cite{WR}, the area interaction process \cite{BV} and the morphological model \cite{M}. Thus, Kendall, Van Lieshout and Baddeley suggested in \cite{KVB} a generalization of the previous models, called the \textit{Quermass Interaction Process}. In this model, the formal Hamiltonian is a linear combination of the fundamental Minkowski functionals, namely in $\Rd$ the area $\A$, the perimeter $\LL$ and the Euler-Poincaré characteristic $\x$:
$$
H = \theta_1 \A + \theta_2 \LL + \theta_3 \x ~.
$$
The existence of infinite volume Gibbs point processes for the Hamiltonian $H$ has been recently proved in \cite{Der09}. This paper focuses on the continuum percolation for such processes.\\

The existence of a percolation threshold $z^*$ for the Boolean model relies on a basic (but essential) monotonicity argument: see \cite{MR}, Chapter 2.2. This argument fails in the case of Gibbs point processes with Hamiltonian $H$. So, no percolation threshold can be expected in our context. However, other stochastic arguments as stochastic domination or FKG lead to percolation results. In \cite{CCK}, Chayes et al prove that percolation occurs for $z$ large enough and $\theta_2=\theta_3=0$. To our knowledge, the percolation phenomenon for other values of parameters $\theta_1,\theta_2,\theta_3$ has not been investigated yet.

Our main result (Theorem \ref{mainTH}) states that, for any $\theta_1,\theta_2,\theta_3$ (positive or negative), percolation occurs with probability $1$ for $z$ large enough. The only assumption bears on the random radii of the circular grains: they have to belong to a compact set not containing $0$. The proof of this theorem is relatively easy in the case $\theta_3=0$. Indeed, the \textit{local energy} $h((x,R),\o)$-- the energy variation when the grain $\bar B(x,R)$ is added to the configuration $\o$ -- is uniformly bounded (see Lemma \ref{Borneenergie}) and by classical stochastic comparison with respect to the Poisson Process the result follows. So the main challenge of the present paper is the proof of Theorem \ref{mainTH} when $\theta_3\neq 0$. In this setting, the local energy becomes unbounded from above and below and classical stochastic comparison arguments for point processes fail. In interpreting the percolation in our model via a site percolation model (see the beginning of Section \ref{SectionProofMainTH}), we prove the result thanks to a stochastic domination result for graphs due to  Liggett et al (Theorem 1.3 in \cite{LSS} or Lemma \ref{StochDomLiggett} below). An arduous control of the hole number variation, when a new grain is added, is the main technical issue. We prove essentially that this variation is moderate for a large enough set of admissible locations of grains. Let us mention that our proof is inspired by the one of Proposition 3.1 in \cite{GH}. 

Following \cite{CCK,GH}, we use our percolation result (Theorem \ref{mainTH}) to exhibit a phase transition phenomenon for Quermass model with several type of particles (Theorem \ref{theo:transition}).\\

Our paper is organized as follows. In Section \ref{QuermassModelsection}, the Quermass model and the main notations are introduced. The local energy $h((x,R),\o)$ is defined in (\ref{localenergy}). Section \ref{Resultssection} contains the results of the paper. Section \ref{stochasticargument} is devoted to the case $\theta_3=0$ and Section \ref{sectiontransition} to the phase transition result. The proof of Theorem \ref{mainTH} is developed in Section \ref{SectionProofMainTH}.

\section{Quermass Model}
\label{QuermassModelsection}

\subsection{Notations}

We denote by  $\B(\Rd)$ the set of bounded Borel sets in $\Rd$ with a positive Lebesgue measure. For any $\L$ and $\Delta$ in $\B(\Rd)$, $\L\oplus\Delta$ stands for the Minkoswki sum of these sets. Let $0\le R_0\le R_1$ be some positive reals and $\E$ be the product space $\Rd \times [R_0,R_1]$ endowed with its natural Euclidean Borel $\sigma$-algebra $\sigma(\E)$. For any $\L\in\B(\Rd)$, $\E_\L$ denotes the space $\L\times [R_0,R_1]$. A {\it configuration} $\o$ is a subset of $\E$ which is locally finite with respect to its first coordinate: $\#(\o\cap\E_\L)$ is finite for any $\L$ in $\B(\Rd)$. The configuration set $\O$ is endowed with the $\sigma$-algebra $\F$ generated by the functions $\o\mapsto\#(\o\cap A)$ for any $A$ in $\sigma(\E)$.\\
We will merely denote by $\o_\L$ instead of $\o\cap\E_\L$ the restriction of the configuration $\o$ (with respect to its first coordinate) to $\L$. Moreover, for any $(x,R)$ in $\E$, we will write $\o\cup (x,R)$ instead of $\o\cup\{(x,R)\}$.

A configuration $\o\in\O$ can be interpreted as a marked configuration on $\Rd$ with marks in $[R_0,R_1]$. To each $(x,R)\in\o$ is associated the closed ball $\bar B(x,R)$ (the grain) centered at $x$ (the germ) with radius $R$. The germ-grain surface $\bar\o$ is defined as
$$
\bar\o = \bigcup_{(x,R)\in\o} \bar B(x,R) ~.
$$

\subsection{Quermass interaction}

Let us define the Quermass interaction as in Kendall et al. \cite{KVB}. The energy (or Hamiltonian) of a finite configuration $\o$ in $\O$ is defined by
\begin{equation}
\label{energy}
H(\o)= \t_1 \A(\bar \o) + \t_2 \LL(\bar \o) +\t_3 \x(\bar\o) ~,
\end{equation}   
where $\t_1$, $\t_2$ and $\t_3$ are three real numbers, and $\A$, $\LL$ and $\x$ are the three fundamental Minkowski functionals, respectively area, perimeter and Euler-Poincaré characteristic. This last one is the difference between the number of connected components and the number of holes. Recall that a hole of $\bar\o$ is a bounded connected component of $\bar\o^c$. Hadwiger's Theorem ensures that any functional $F$ defined on the space of finite unions of convex compact sets, which is continuous for the Hausdorff topology, invariant under isometric transformations and additive (i.e. $F(A\cup B)=F(A)+F(B)-F(A\cap B)$) can be decomposed as in (\ref{energy}). This universal representation justifies the choice of the Quermass interaction for modelling mesoscopic random surfaces \cite{LMW,M}.   

The energy inside $\L\in\B(\Rd)$ of any given configuration $\o$ in $\O$ (finite or not) is defined by
\begin{equation}
\label{energyL}
H_\L(\o) = H(\o_{\Delta}) - H(\o_{\Delta\backslash \L}) ~,
\end{equation}
where $\Delta$ is any subset of $\Rd$ containing $\L\oplus B(0,2R_1)$. By additivity of functionals $\A$, $\LL$ and $\x$, the difference $H_\L(\o)$ does not depend on the chosen set $\Delta$.

Let us end with defining the local energy $h((x,R),\o)$ of the marked point $(x,R)\in\E$ (or of the associated ball $\bar B(x,R)$) with respect to the configuration $\o$:
\begin{equation}
\label{localenergy}
h((x,R),\o) = H_\L(\o\cup(x,R)) - H_\L(\o) ~,
\end{equation}
for any $\L\in\B(\Rd)$ containing $x$. Remark this definition does not depend on the choice of the set $\L$. The local energy $h((x,R),\o)$ represents the energy variation when the ball $\bar B(x,R)$ is added to the configuration $\o$.

\subsection{The Gibbs property}

Let $Q$ be a reference probability measure on $[R_0,R_1]$. Without loss of generality, $R_0$ and $R_1$ can be chosen such that, for every $\eps>0$, 
\begin{equation}
\label{optimalQ}
Q([R_0+\eps,R_1]) < 1 \; \text{ and } \; Q([R_0,R_1-\eps]) < 1 ~.
\end{equation}
Let $z>0$. Let us denote by $\l$ the Lebesgue measure on $\Rd$ and by $\pi^z$ the PPP on $\E$ with intensity measure $z\l\otimes Q$. Under $\pi^z$, the law of the random surface $\bar \o$ is the stationary boolean model with intensity $z>0$ and distribution of radius $Q$. Finally, for any $\L\in\B(\Rd)$, let us denote by $\pi^z_\L$ the PPP on $\E_\L$ with intensity measure $z\l_\L\otimes Q$, where $\l_\L$ is the restriction of the Lebesgue measure $\l$ to $\L$.

\begin{definition}
\label{Gibbs}
A probability measure $P$ on $\O$ is a Quermass Process for the intensity $z>0$ and the parameters $\theta_1, \theta_2, \theta_3$ if $P$ is stationary and if for every $\L$ in $\B(\Rd)$, for every bounded positive measurable function $f$ from $\O$ to $\R$, 
\begin{equation}
\label{DLR}
\int f(\o) P(d\o) =  \int\int f(\o'_\L\cup \o_{\L^c}) \frac{1}{Z_\L(\o_{\L^c})} e^{-H_\L(\o'_{\L}\cup\o_{\L^c})}\pi^z_\L(d\o'_\L) P(d\o) ~,
\end{equation}
where $Z_\L(\o_{\L^c})$ is the partition function
$$
Z_\L(\o_{\L^c}) = \int\int  e^{-H_\L(\o'_{\L}\cup\o_{\L^c})}\pi^z_\L(d\o'_\L) ~.
$$
\end{definition}

The equations (\ref{DLR})-- for all $\L\in\B(\Rd)$ --are called DLR for Dobrushin, Landford and Ruelle. They are equivalent to: for any $\L\in\B(\Rd)$, the law of $\o_\L$ under $P$ given $\o_{\L^c}$ is absolutely continuous with respect to the Poisson Process $\pi^z_\L$ with the local density
\begin{equation}
\label{localdensity}
g_\L(\o'_{\L}|\o_{\L^c}) = \frac{1}{Z_\L(\o_{\L^c})} e^{-H_\L(\o'_{\L}\cup\o_{\L^c})} ~.
\end{equation}
See \cite{P} for a general presentation of Gibbs measures and DLR equations.

The existence, the uniqueness or non-uniqueness (phase transition) of Quermass processes are difficult problems in statistical mechanics. The existence has been proved recently in \cite{Der09}, Theorem 2.1 for any parameters $z>0$ and $\theta_1, \theta_2, \theta_3$ in $\R$ . A phase transition result is proved in \cite{CCK,GLM,WR} for $R_0=R_1$, $\theta_2=\theta_3=0$ and for $\theta_1=z$ large enough.

\section{Results}
\label{Resultssection}

\subsection{Percolation occurs}

We say that \textit{percolation occurs} for a given configuration $\o\in\O$ if the subset $\bar\o$ of $\Rd$ contains at least one unbounded connected component. The set of configurations such that percolation occurs is a translation invariant event. Its probability, called \textit{the percolation probability}, equals to $0$ or $1$ for any ergodic Quermass process. However, the Quermass processes are not necessarily ergodic (they are only stationary) and their percolation probabilities may be different from $0$ and $1$. Besides, in \cite{CCK}, Chayes et al have built two Quermass processes, both corresponding to $\theta_2=\theta_3=0$ and $\theta_1=z$ large enough, whose percolation probabilities respectively equal to $0$ and $1$. Since any mixture of these two processes is still a Quermass process, the authors obtain Quermass processes whose percolation probabilities equal to any value between $0$ and $1$.\\
Our main result states that percolation occurs with probability $1$ for any (ergodic or not) Quermass process whenever the intensity $z$ is large enough.

\begin{theoreme}
\label{mainTH}
Let $R_0>0$ and $\theta_1,\theta_2,\theta_3\in\R$. There exists $z^{\ast}>0$ such that for any Quermass process $P$ associated to the parameters $\theta_1,\theta_2,\theta_3$ and $z>z^{\ast}$, percolation occurs $P$-almost surely.
\end{theoreme}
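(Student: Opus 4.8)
The plan is to treat the regimes $\t_3=0$ and $\t_3\neq 0$ separately. \emph{The case $\t_3=0$.} Here Lemma~\ref{Borneenergie} provides a uniform bound $|h((x,R),\o)|\le c$ on the local energy, for all $(x,R)\in\E$ and $\o\in\O$. Hence the Papangelou conditional intensity of any Quermass process $P$, which by the DLR equations~(\ref{DLR}) equals $z\,e^{-h((x,R),\o)}$, is everywhere at least $ze^{-c}$, so that $P$ stochastically dominates the Poisson point process $\pi^{ze^{-c}}$ (classical stochastic comparison of Gibbs point processes with the Poisson process). Under $\pi^{ze^{-c}}$ the surface $\bar\o$ is a stationary Boolean model with intensity $ze^{-c}$ and radius law $Q$ supported in $[R_0,R_1]$; as $R_1<\infty$ the grains have finite volume, and this Boolean model percolates once $ze^{-c}$ exceeds its finite percolation threshold $z_c^{\mathrm{Bool}}$. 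Since $\{\text{percolation occurs}\}$ is an increasing event, percolation holds $P$-a.s.\ for every $z>z^\ast:=e^{c}z_c^{\mathrm{Bool}}$.

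\emph{The case $\t_3\neq 0$.} This is the substantial case: the simple bound on $h$ is lost and direct Poisson comparison fails, so I would pass to a coarse-grained site model. Tile $\Rd$ by the cells $\L_i=\ell i+[0,\ell)^2$, $i\in\Zd$, with $\ell$ small enough (e.g.\ $\ell<R_0/\sqrt2$) that any two balls with radii in $[R_0,R_1]$ and centres in two $\ast$-neighbouring cells necessarily intersect. Call $i$ \emph{occupied} if $\o$ has a point with first coordinate in $\L_i$; then the balls carried by an infinite $\ast$-connected cluster of occupied cells form an unbounded connected subset of $\bar\o$, so it suffices that the occupied field percolates. By the stochastic domination theorem for graphs of Liggett, Schonmann and Stacey (\cite{LSS}, Lemma~\ref{StochDomLiggett}) this follows once one proves, uniformly over all boundary configurations,
\[
P\big(\,\o_{\L_i}\neq\emptyset\,\big|\,\F_{\L_i^c}\,\big)\ \ge\ p(z),\qquad p(z)\to 1\ \text{ as }z\to\infty .
\]
Using the local density~(\ref{localdensity}) and bounding the partition function $Z_{\L_i}(\o_{\L_i^c})$ from below by the contribution of the one-point configurations in $\L_i$, this reduces to showing: for each boundary configuration, $h((x,R),\o_{\L_i^c})\le C$ (a constant depending only on $R_0,R_1,\t_1,\t_2,\t_3$) for $(x,R)$ ranging over a set of $\l\otimes Q$-measure at least some fixed $\kappa>0$.

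To that end, decompose $h=\t_1\Delta\A+\t_2\Delta\LL+\t_3\Delta\x$ into the area, perimeter and Euler--Poincar\'e increments produced by adding $\bar B(x,R)$ to $\overline{\o_{\L_i^c}}$. One has $0\le\Delta\A\le\pi R_1^2$; the perimeter increment is controlled because a union of discs of radii $\ge R_0$ has perimeter bounded on bounded sets (a packing argument using $R_0>0$), and for the same reason a single new grain can merge only a bounded number of connected components. The delicate term is $\Delta\x$, i.e.\ the variation of the \emph{hole number} of $\overline{\o_{\L_i^c}}$. The key lemma I would prove --- in the spirit of Proposition~3.1 of \cite{GH} --- asserts that, for every boundary configuration, the set of centres $x\in\L_i$ for which adding a grain changes the hole number by more than a threshold $M=M(R_0,R_1)$ has Lebesgue measure at most $\eps(M)$, with $\eps(M)\to 0$ as $M\to\infty$, uniformly in $\o_{\L_i^c}$; the geometric input is that any hole of $\overline{\o_{\L_i^c}}$ merged, split, filled or created by $\bar B(x,R)$ must lie within distance $R_1$ of $x$ and be bounded by circular arcs of radius $\ge R_0$, so only a bounded number of such changes can be forced for all but a small-measure set of admissible positions $x$. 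Granted this, one restricts the one-point integral to the good set, gets $h\le C$ there, whence $P(\o_{\L_i}=\emptyset\mid\F_{\L_i^c})\le C'/z$ and $p(z)=1-C'/z\to 1$, and Lemma~\ref{StochDomLiggett} delivers percolation for $z$ large.

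The main obstacle is exactly this hole-number estimate: a priori a single added grain can alter the hole number of an adversarial boundary configuration substantially, and one must separate the changes that are unavoidable (hence occur for every centre $x$) from those that can be circumvented by choosing $x$ outside a small exceptional set, all while keeping the measure of that set small uniformly in the boundary condition. Everything else --- the reduction to $\ast$-site percolation, the one-point lower bound on $Z_{\L_i}$, and the two external inputs (Lemmas~\ref{Borneenergie} and~\ref{StochDomLiggett}) --- is routine.
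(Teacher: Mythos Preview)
Your treatment of $\theta_3=0$ is correct and coincides with the paper's argument (Proposition~\ref{Propdomination}).

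For $\theta_3\neq 0$ there is a genuine gap, and it lies precisely where you anticipate. The key lemma you propose --- that, uniformly in the boundary configuration $\o_{\L_i^c}$, the set of centres $x\in\L_i$ for which $|\mathcal N_{hol}((x,R),\o_{\L_i^c})|>M$ has Lebesgue measure at most $\eps(M)\to0$ --- is \emph{false} when $\theta_3>0$, i.e.\ in the direction where hole \emph{deletion} must be controlled. Here is a counterexample: take a ball $B_0=\bar B(c_0,R_0)$ with $c_0$ at distance $R_0$ from the centre of $\L_i$ (so $c_0\in\L_i^c$), and surround it by $n$ balls of radius $R_0$ centred on the circle of radius $2R_0$ around $c_0$, with $n$ large enough that consecutive outer balls overlap. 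This configuration lies entirely in $\o_{\L_i^c}$ and carries $\sim n$ small curvilinear-triangle holes along $\partial B_0$. A fixed positive fraction of these holes lies within distance $R_0$ of every point of $\L_i$, so for \emph{every} $x\in\L_i$ and every $R\in[R_0,R_1]$ the ball $\bar B(x,R)$ fills $\sim cn$ holes. Since $n$ is arbitrary, the entire cell $\L_i$ is ``bad'' for every threshold $M$. Consequently, for such boundary data $P(\o_{\L_i}=\emptyset\mid\o_{\L_i^c})$ need not tend to $0$ as $z\to\infty$, and your $\ast$-site field need not percolate. (In this very configuration $\L_i$ is already covered by $\bar\o_{\L_i^c}$, so continuum percolation may well hold; the coarse-grained event ``$\o_{\L_i}\neq\emptyset$'' is simply the wrong proxy.)

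The paper's coarse-graining avoids this by using a \emph{connectivity} event rather than an occupation event: on a large diamond box $\Delta$ one sets $\xi=1$ iff four cardinal sub-boxes are joined in $\bar\o_\Delta$. The substantial step (the Connection Lemma~\ref{ConnectionLemma}) is to bound $P(N_{cc}^\Delta(\o)\ge2\mid\o_{\Delta^c})$, and the energy estimate is carried out \emph{under the hypothesis} $N_{cc}^\Delta(\o)\ge2$. For $\theta_3>0$ this hypothesis is exactly what supplies the missing geometry: two disjoint connected components force a gap, and Lemma~\ref{ExistenceBoule} locates in that gap a point $O$ such that $B(O,(1+\rho)R_0)$ contains \emph{no} hole of $\bar\o$; placing the new grain near $O$ then deletes zero holes and gives the required upper bound on $h$. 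For $\theta_3<0$ your intuition (control of \emph{created} holes via a small bad set of centres) is essentially right and is what the paper implements as Lemmas~\ref{BorneN_t} and~\ref{limiteBad}; but even there the paper first uses $N_{cc}^\Delta\ge2$ to pick out a specific empty sub-box, rather than demanding that every cell be occupied with high probability against every boundary configuration.
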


The proof of Theorem \ref{mainTH} is based on a discretization argument which allows to reduce the percolation problem from the (continuum) Quermass model to a site percolation model on the lattice $\Z^{2}$ (up to a scale factor). This proof is rather long and technical so it is addressed in Section \ref{SectionProofMainTH}.\\
Let us point out here that our theorem does not claim $z^{\ast}$ is a percolation threshold. In other words, for $z<z^{\ast}$, the percolation may be lost and recovered on different successive ranges.\\
Another natural question involves the number of unbounded connected components. Following the classical arguments for continuum percolation, we prove that this number is almost surely equal to zero or one.

\begin{proposition}
\label{numberInfCC}
For any Quermass process $P$ the number of unbounded connected component is a random variable in $\{0,1\}$.
\end{proposition}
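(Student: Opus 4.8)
The plan is to adapt the classical Burton--Keane uniqueness argument to the Gibbsian setting of the Quermass model. First I would argue that it suffices to prove uniqueness under an \emph{ergodic} Quermass process $P$. Indeed, by the ergodic decomposition, any stationary $P$ can be written as a mixture of stationary ergodic measures; each of these is again a Quermass process (it satisfies the same DLR equations (\ref{DLR}), since the local specification $g_\L(\cdot\mid\cdot)$ is fixed), so if the number of unbounded connected components is $P'$-a.s.\ constant and equal to some $N(P')\in\{0,1,2,\dots,\infty\}$ for every ergodic component $P'$, it suffices to rule out $N(P')\in\{2,3,\dots\}\cup\{\infty\}$ for ergodic $P'$. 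The event $\{$number of unbounded components $=k\}$ is translation invariant, hence has $P'$-probability $0$ or $1$; this is where ergodicity is used.

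Next, fix an ergodic Quermass process $P=P'$ and suppose for contradiction that the number $N$ of unbounded components of $\bar\o$ satisfies $2\le N\le\infty$ a.s. The standard dichotomy is: either $N=k$ for some finite $k\ge 2$, or $N=\infty$. The finite case $k\ge2$ is excluded by a local modification argument: one shows that with positive probability one can, inside a large ball $\L$, connect all $k$ unbounded components by adding finitely many grains. Concretely, using the DLR equation (\ref{DLR}) one bounds below the conditional probability, given $\o_{\L^c}$, of a configuration $\o'_\L$ that fills $\L$ densely enough to merge the unbounded clusters passing through $\L$; this probability is positive because the local energy $H_\L(\o'_\L\cup\o_{\L^c})$ is finite (the relevant Minkowski functionals of a finite union of balls in a bounded region are finite), so $g_\L(\o'_\L\mid\o_{\L^c})>0$. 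By stationarity, the probability of merging over some translate of $\L$ is positive, contradicting the a.s.\ constancy of $N$ unless $N\le1$. For the case $N=\infty$, I would run the Burton--Keane trihedral-point argument: call $x\in\Rd$ an encounter point if $\bar\o\setminus \bar B(x,\rho)$ (for a suitable fixed $\rho$, e.g.\ $\rho = 3R_1$) splits the component of $x$ into at least three unbounded pieces. A local-modification plus DLR argument analogous to the one above shows that if $N=\infty$ with positive probability, then the expected number of encounter points in a box is positive and grows (by stationarity) linearly in the volume; but a deterministic combinatorial/topological bound shows the number of encounter points in a box of side $L$ is $O(L)$ (they are the leaves/branch points of a forest of disjoint trees embedded in the box, whose boundary has size $O(L)$). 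This contradiction forces $N\le1$.

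The main obstacle is making the local-modification step rigorous in the \emph{continuum} Gibbsian framework: one must verify that the specific configurations $\o'_\L$ used to merge clusters (or to create encounter points) receive positive conditional probability under $g_\L(\cdot\mid\cdot)$, uniformly enough over $\o_{\L^c}$, and that the topological counting of encounter points genuinely gives an $O(L)$ bound when "components" are unions of fat balls rather than lattice clusters. The first point is handled by noting $H_\L$ is bounded on the (compact, in the appropriate sense) set of configurations with a bounded number of points in $\L$, and that $\pi^z_\L$ charges every nonempty open set of such configurations; the second is handled by passing to the nerve-type graph of the grains, or equivalently by the standard planar Burton--Keane counting, using that $R_0>0$ keeps grains of bounded overlap so encounter points can be chosen on a locally finite set. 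Once these two technical points are in place, the contradiction closes the proof and gives $N\in\{0,1\}$ for ergodic $P$, hence, via the ergodic decomposition, $N\in\{0,1\}$ for every Quermass process.
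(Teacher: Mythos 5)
Your proof plan follows essentially the same route as the paper: reduce to the ergodic case via the ergodic decomposition, observe that the DLR equations make the law of $\o_\L$ under $P$ equivalent to $\pi^z_\L$ (so local-modification arguments carry positive probability), and then invoke the classical Burton--Keane uniqueness argument for continuum percolation. The paper simply compresses your treatment of the finite-$k$ and $N=\infty$ cases into a reference to the proof of Theorem~2.1 in Meester--Roy, whose general scheme is exactly what you spell out.
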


\begin{proof}
It is well-known that any Gibbs measure is a mixture of extremal ergodic Gibbs measures. For each ergodic Quermass process $P$, the number of connected component is almost surely a constant in $\NN\cup\{+\infty\}$. For any $\L\in\B(\Rd)$, thanks to the DLR equations (\ref{DLR}), it is easy to prove that the law of $\o_\L$ under $P$ is equivalent to $\pi^{z}_\L$. Therefore, in following the general scheme of the proof of Theorem 2.1 in \cite{MR}, we show that the number of connected components is necessary $0$ or $1$. 
\end{proof}

\subsection{Percolation when $\theta_3=0$}
\label{stochasticargument}

In the particular case $\theta_3=0$, Theorem \ref{mainTH} can be completed and proved in a simple way.

First, let us recall the definitions involving the stochastic domination for point processes. We follow the notations given in \cite{GK}. An event $A$ in $\F$ is called increasing if for every $\o\in A$ and any $\o'\in\O$ containing $\o$  then $\o'\in A$ too. Let $P$ and $P'$ be two probability measures on $\O$. We say that $P$ is dominated by $P'$, denoted by $P\preceq P'$, if for every increasing event $A\in\F$, $P(A)\le P'(A)$. In this section, we focus our attention on the increasing event "there exists an unbounded connected component".

Let $P$ be any Quermass process and assume $\theta_3=0$ and $R_0>0$. Thanks to Lemma \ref{Borneenergie}, the local energy can be uniformly bound: there exist constants $C_0$ and $C_1$ such that for any $(x,R)\in\E$ and $\o\in\O$, 
\begin{equation}
\label{borneenergie}
C_0 \le h((x,R),\o) \le C_1 ~.
\end{equation}
Combining (\ref{borneenergie}) and Theorem 1.1 in \cite{GK}, we get the following stochastic dominations:
$$
\pi^{ze^{-C_1}} \preceq P \preceq  \pi^{ze^{-C_0}} ~.
$$
Now, the (stationary) Boolean models corresponding to $\pi^{ze^{-C_1}}$ and $\pi^{ze^{-C_0}}$ admit positive and finite percolation thresholds (see \cite{MRbook}, Chapter 3). It follows :

\begin{proposition}
\label{Propdomination}
Let $R_0>0$. For every $\theta_1, \theta_2$ in $\R$, there exist constants $z_0,z_1$ such that for any Quermas Process $P$ associated to parameters $z, \theta_1, \theta_2$ and $\theta_3=0$, the percolation occurs $P$-almost surely if $z>z_1$ and does not occur $P$-almost surely if $z<z_0$.  
\end{proposition}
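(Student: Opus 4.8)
The plan is to deduce the statement from three ingredients, two taken from the literature and one internal to the paper: the uniform bound on the local energy of Lemma~\ref{Borneenergie} (which holds precisely because $\theta_3=0$ and $R_0>0$), the stochastic domination of a Gibbs point process by Poisson point processes when its Papangelou intensity is sandwiched between two constants (Theorem~1.1 in \cite{GK}), and the classical fact that the Boolean model with radii confined to $[R_0,R_1]$, $0<R_0\le R_1<\infty$, admits a percolation threshold in $(0,\infty)$. The one preliminary observation is that the target event ``percolation occurs'', $\{\o\in\O:\bar\o\text{ has an unbounded connected component}\}$, is increasing in the sense of Section~\ref{stochasticargument}: if $\o\subseteq\o'$ then $\bar\o\subseteq\bar\o'$, so any unbounded connected component of $\bar\o$ lies in a (necessarily unbounded) connected component of $\bar\o'$.

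First I would localize the energy. Since $\theta_3=0$ and $R_0>0$, Lemma~\ref{Borneenergie} supplies reals $C_0\le C_1$ with $C_0\le h((x,R),\o)\le C_1$ for all $(x,R)\in\E$ and $\o\in\O$; equivalently, for any Quermass process $P$ with these parameters the Papangelou conditional intensity $z\,e^{-h((x,R),\o)}$ lies between the constants $z\,e^{-C_1}$ and $z\,e^{-C_0}$. Theorem~1.1 of \cite{GK} then gives the sandwich $\pi^{ze^{-C_1}}\preceq P\preceq\pi^{ze^{-C_0}}$.

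Next I would bring in continuum percolation for the Boolean model (\cite{MRbook}, Chapter~3): since $Q$ is supported in $[R_0,R_1]$ with $0<R_0\le R_1<\infty$, the stationary Boolean model driven by $\pi^{\lambda}$ has a percolation threshold $\lambda_c\in(0,\infty)$, percolating almost surely for $\lambda>\lambda_c$ and, almost surely, not percolating for $\lambda<\lambda_c$. Put $z_1:=\lambda_c\,e^{C_1}$ and $z_0:=\lambda_c\,e^{C_0}$, so $0<z_0\le z_1$. If $z>z_1$ then $ze^{-C_1}>\lambda_c$, hence $\pi^{ze^{-C_1}}$ percolates almost surely; since percolation is increasing and $\pi^{ze^{-C_1}}\preceq P$, we obtain $P(\text{percolation})=1$. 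If $z<z_0$ then $ze^{-C_0}<\lambda_c$, hence $\pi^{ze^{-C_0}}$ almost surely does not percolate, and $P\preceq\pi^{ze^{-C_0}}$ forces $P(\text{percolation})=0$.

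I do not expect a genuine obstacle: the substance is carried by Lemma~\ref{Borneenergie} and by the two cited theorems, and the remainder is bookkeeping of constants. The only points needing a little care are the monotonicity remark above and checking that the hypotheses of \cite{GK} hold in the germ-grain setting (measurability of the conditional density and the two-sided bound on it), both immediate from the boundedness of the radii and of the local energy. If one wanted a self-contained treatment, the mildly delicate part would be Lemma~\ref{Borneenergie} itself: the area increment when adding a disk $\bar B(x,R)$ is trivially in $[0,\pi R_1^2]$, whereas bounding the perimeter increment uniformly in $\o$ is what genuinely uses $R_0>0$, since only boundedly many grains of radius $\ge R_0$ can expose boundary near $\bar B(x,R)$.
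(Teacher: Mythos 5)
Your proposal is correct and follows essentially the same route as the paper's proof: bound the local energy via Lemma~\ref{Borneenergie} (valid precisely because $\theta_3=0$ and $R_0>0$), invoke Theorem~1.1 of \cite{GK} to sandwich $P$ between the two Poisson processes $\pi^{ze^{-C_1}}\preceq P\preceq\pi^{ze^{-C_0}}$, and transfer the known Boolean-model percolation threshold through the increasing event ``percolation occurs.'' The only difference is that you spell out the bookkeeping of the constants $z_0,z_1$ and the monotonicity remark more explicitly than the paper does.
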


Proposition \ref{Propdomination} improves Theorem \ref{mainTH} in the case $\theta_3=0$ since it ensures the existence of a subcritical regime.

It is worth pointing out here that the uniform bounds in (\ref{borneenergie}) do not hold whenever $\theta_3\neq 0$. Precisely, this is the hole number variation which cannot be uniformly bounded.

\subsection{Phase transition for multi-type Quermass Process}
\label{sectiontransition}

In this section, the multi-type Quermass model is introduced and a phase transition is exhibited, i.e. the existence of several Gibbs processes for the same parameters is proved.

Let $K$ be a positive integer. The $K$-type Quermass model is defined on the space $\Omega_K$ of configurations in $\E_K=\Rd \times [R_0,R_1]\times\{1,2,\ldots,K\}$. Each disc is now marked by a number specifying its type. We don't give the natural extension of the notations involving the sigma-field and so on.\\
The following Quermass energy function is defined such that all discs of a connected component have the same number. This is a non-overlapping multi-type germ-grain model. Precisely the energy of a finite configuration $\o$ is now given by 
\begin{equation}
\label{energymc}
H(\o)= \t_1 \A(\bar \o) + \t_2 \LL(\bar \o) +\t_3 \x(\bar\o)+ \sum_{
\begin{subarray}{c}
(x,R,i),(y,R',j)\in\o\\
i\neq j
\end{subarray}
} \phi(|x-y|-R-R') ~,
\end{equation}
where $\phi$ is an hardcore potential equals to infinity on $]-\infty,0]$ and zero on $]0,+\infty]$. The energy inside $\L\in\B(\Rd)$ of any finite or infinite configuration $\o$ is defined as in (\ref{energyL}) with the convention $+\infty-\infty=+\infty$. The definition of the $K$-type Quermass process via the DLR equations follows as in Definition \ref{Gibbs}.

The proof of the existence of such processes is similar to the one of the existence of Quermass process. See Theorem 2.1 of \cite{Der09} for more details. Here is our phase transition result:

\begin{theoreme}
\label{theo:transition}
Let $R_0>0$. For any $\theta_1$, $\theta_2$ and $\theta_3$ in $\R$, there exists $z_0>0$ such that, for any $z>z_0$, there exist several $K$-type Quermass Processes. The phase transition occurs. 
\end{theoreme}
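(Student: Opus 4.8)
The plan is to derive the phase transition for the $K$-type model from the single-type percolation result (Theorem \ref{mainTH}) by a symmetry-breaking argument in the spirit of \cite{CCK,GH,GLM}. First I would observe that, because of the hardcore interaction $\phi$ between discs of distinct types in \eqref{energymc}, every connected component of the germ-grain surface $\bar\o$ carries a single colour; hence a configuration of the $K$-type model is nothing but a single-type Quermass configuration together with an assignment of one colour in $\{1,\dots,K\}$ to each connected component, and the weight of such a coloured configuration relative to the uncoloured one is exactly $K^{\#\text{bounded components}}$ for the components seen in a bounded window (the unbounded component, if it exists, gets a free colour choice). Concretely, fix a large box $\L$ and, for each $i\in\{1,\dots,K\}$, build a finite-volume $K$-type Gibbs measure $P_\L^{(i)}$ in $\L$ with the boundary condition that the exterior is entirely filled with discs of type $i$ (or, more carefully, with a boundary condition forcing any component touching $\partial\L$ to have colour $i$). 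I would then take $z_0$ to be the threshold $z^\ast$ of Theorem \ref{mainTH} (applied with the same $\theta_1,\theta_2,\theta_3$), fix $z>z_0$, and let $P^{(i)}$ be any weak limit of $P_\L^{(i)}$ along a subsequence of boxes; each $P^{(i)}$ is a $K$-type Quermass process for these parameters by the standard argument (the one used for Theorem 2.1 of \cite{Der09}).

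The core step is to show $P^{(1)}\neq P^{(2)}$ (say), and for this I would use the percolation result as follows. Under $P_\L^{(i)}$, forget the colours: the resulting marginal on uncoloured configurations in $\L$ is a finite-volume Quermass measure in $\L$ with some boundary condition, and by Theorem \ref{mainTH} (together with the finite-energy / DLR comparison already used in the proof of Proposition \ref{numberInfCC}, ensuring that the percolation probability in the infinite-volume limit is $1$ for $z>z^\ast$) the probability that a given finite region near the origin is connected through $\bar\o$ to $\partial\L$ is bounded below, uniformly in $\L$, by some $p>0$ that does not depend on $i$. On the event that the origin's disc (if present) lies in a component reaching $\partial\L$, that component has colour $i$ under $P_\L^{(i)}$; consequently the $P_\L^{(i)}$-probability that the disc covering the origin — conditioned to be present — has colour $i$ exceeds $\tfrac1K + c$ for some $c>0$ uniform in $\L$, whereas a symmetric argument bounds the probability it has colour $j\neq i$ by $\tfrac1K - c'$. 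Passing to the limit, $P^{(1)}$ and $P^{(2)}$ assign different probabilities to the (measurable) event ``the component containing the origin has colour $1$'', so they are distinct; running $i$ over $\{1,\dots,K\}$ gives $K$ mutually distinct processes and the phase transition.

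I would carry out the steps in this order: (1) set up $\Omega_K$, the coloured/uncoloured correspondence, and the identity relating the two weights via $K^{(\text{number of components})}$; (2) define the boundary-conditioned finite-volume measures $P_\L^{(i)}$ and verify tightness and the DLR property of subsequential limits, citing \cite{Der09}; (3) transfer the percolation lower bound from Theorem \ref{mainTH} to a uniform-in-$\L$ lower bound on the ``origin connected to $\partial\L$'' event under the uncoloured marginal of $P_\L^{(i)}$, which is the delicate point since Theorem \ref{mainTH} is an infinite-volume statement and one must control boundary effects — here I would either work directly with the finite-volume measures using the monotone structure of the discretized site-percolation model from Section \ref{SectionProofMainTH}, or invoke a compactness/\,DLR argument to push the positive percolation probability down to finite volume; (4) conclude the colour bias and hence $P^{(i)}\neq P^{(j)}$. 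The main obstacle I anticipate is precisely step (3): making rigorous that a large-$z$ percolation statement for the single-type model yields a quantitative long-range connection probability in finite volume with a colour-forcing boundary condition, uniformly in the box. The rest is the now-standard Widom--Rowlinson-type bookkeeping.
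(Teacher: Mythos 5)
Your overall approach---the random-cluster (Gray) representation, percolation of the gray measure, then symmetry-breaking via colour-forcing boundary conditions---is exactly the scheme the paper points to when it cites \cite{CCK,GH}, so the route is right, and you correctly identify step (3) as the place where real work is needed.

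There is, however, a concrete gap in how you set up step (3). Summing out the colours multiplies the single-type Quermass Boltzmann weight by the random-cluster factor $K^{N_{cc}(\bar\o)}$. Since $\chi=N_{cc}-N_{hol}$, the resulting gray weight is
$$
\exp\!\bigl\{-\theta_1\A(\bar\o)-\theta_2\LL(\bar\o)-(\theta_3-\log K)\,N_{cc}(\bar\o)+\theta_3\,N_{hol}(\bar\o)\bigr\},
$$
which is \emph{not} a Quermass energy for any single coefficient in front of $\chi$: integrating out the colours splits the coefficients of $N_{cc}$ and $N_{hol}$. Consequently the uncoloured marginal of $P_\L^{(i)}$ is not ``a finite-volume Quermass measure'' as you assert, and Theorem \ref{mainTH} cannot be applied to it verbatim --- and certainly not ``with the same $\theta_1,\theta_2,\theta_3$'' as you write when defining $z_0$. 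The repair is to note that the proof of Theorem \ref{mainTH} uses the Euler characteristic only through the hole-number variation $\mathcal{N}_{hol}$ in the bound \eqref{firstUB}, whereas the connected-component-number variation $\mathcal{N}_{cc}$ is uniformly bounded by inequality \eqref{diffcc} of Lemma \ref{Borneenergie}; the extra $K^{N_{cc}}$ factor therefore only shifts the constant in \eqref{firstUB} by $(\log K)\sup|\mathcal{N}_{cc}|$, and the whole machinery of Section \ref{SectionProofMainTH} carries over to the gray specification, giving percolation above a (possibly larger) threshold. This needs to be said explicitly; once it is, the remaining Widom--Rowlinson-type bookkeeping in your steps (2) and (4) is the standard argument the paper has in mind.
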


The proof essentially follows the scheme of the one of Theorem 2.2 of \cite{CCK} or Theorem 1.1 of \cite{GH}. It is based on a random-cluster representation (or Gray Representation) analogous to the Fortuin-Kasteleyn representation of the Potts model. The existence of an unbounded connected component allows to prove the existence of a $K$-type Quermass process in which the density of particles of a given type is larger than the ones of the other types. By symmetry of the types, we prove the existence of at least $K$ different $K$-type Quermass processes.

\section{Proof of Theorem \ref{mainTH}}
\label{SectionProofMainTH}

\subsection{General scheme}

In the following, $P$ denotes a stationary Quermass process on $\O$ associated to the intensity $z>0$ and the parameters $\theta_1,\theta_2,\theta_3\in\R$.\\
Let $\ell$ be a real number such that $\ell>2R_{1}+2R_{0}$. Let us define the diamond box $\Delta$ as the interior of the convex hull of the eight points $(3\ell,0)$, $(6\ell,0)$, $(9\ell,3\ell)$, $(9\ell,6\ell)$, $(6\ell,9\ell)$, $(3\ell,9\ell)$, $(0,6\ell)$ and $(0,3\ell)$. This large octagon contains four smaller boxes $B_{\texttt{N}}$, $B_{\texttt{S}}$, $B_{\texttt{E}}$ and $B_{\texttt{W}}$ with side length $\ell$; precisely $B_{\texttt{N}}=(4\ell,7\ell)+[0,\ell]^{2}$, $B_{\texttt{S}}=(4\ell,\ell)+[0,\ell]^{2}$, $B_{\texttt{E}}=(7\ell,4\ell)+[0,\ell]^{2}$ and $B_{\texttt{W}}=(\ell,4\ell)+[0,\ell]^{2}$. The subscripts $\texttt{N}$, $\texttt{S}$, $\texttt{E}$ and $\texttt{W}$ refer to the cardinal directions. See Figure \ref{fig:diamondbox}. Thus, let us introduce the indicator function $\xi$ defined on $\O$ and equal to $1$ if and only if the two following conditions are satisfied:
\begin{itemize}
\item[(C1)] Each box $B_{\texttt{N}}$, $B_{\texttt{S}}$, $B_{\texttt{E}}$ and $B_{\texttt{W}}$, contains at least one point of ${\o}_{\Delta}$;
\item[(C2)] The number $N_{cc}^{\Delta}(\o)$ of connected components of $\bar{\o}_{\Delta}$ having at least one ball centered in one of the boxes $B_{\texttt{N}}$, $B_{\texttt{S}}$, $B_{\texttt{E}}$ or $B_{\texttt{W}}$, is equal to $1$.
\end{itemize}
In other words, $\xi(\o)=1$ means the boxes $B_{\texttt{N}}$, $B_{\texttt{S}}$, $B_{\texttt{E}}$ and $B_{\texttt{W}}$ are connected through $\bar{\o}_{\Delta}$.

\begin{figure}[!ht]
\begin{center}
\psfrag{a}{\small{$3\ell$}}
\psfrag{b}{\small{$6\ell$}}
\psfrag{c}{\small{$9\ell$}}
\includegraphics[width=7cm,height=7cm]{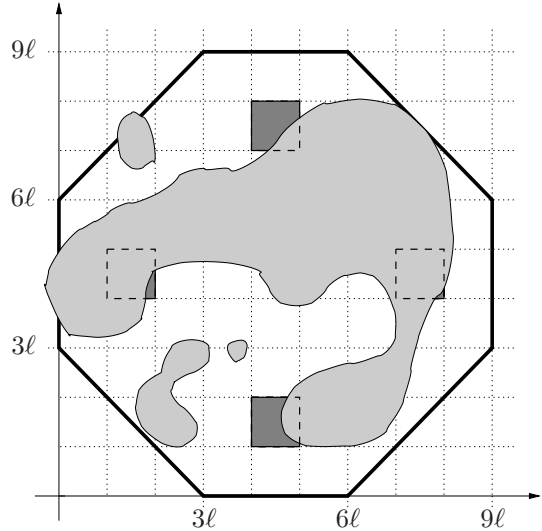}
\end{center}
\caption{\label{fig:diamondbox} \small{Here is the diamond box $\Delta$. The light gray set represents the configuration $\o$ restricted to $\Delta$. The dark gray squares are the fourth cardinal boxes $B_{\texttt{N}}$, $B_{\texttt{S}}$, $B_{\texttt{E}}$ and $B_{\texttt{W}}$ with side length $\ell$. On this picture, conditions (C1) and (C2) are fulfilled, i.e. $\xi(\o)=1$.}}
\end{figure}

For any $x\in(6\ell\,\Z)^{2}$, let $\tau_{x}$ be the translation operator on the configuration set $\E$ defined by $(y,R)\in\tau_{x}\o$ if and only if $(y+x,R)\in\o$. Hence, we can define the translated indicator function $\xi_{x}$ of $\xi$ on the translated box $\Delta_{x}=x+\Delta$ by $\xi_{x}(\o)=\xi(\tau_{x}\o)$. Let us remark that $\xi_{x}(\o)$ only depends on the restriction of the configuration $\o$ to the box $\Delta_{x}$. Moreover, thanks to the stationary character of the Quermass process $P$, the random variables $\xi_{x}$, $x\in(6\ell\,\Z)^{2}$, are identically distributed. They are dependent too.\\
Let us consider $x,y\in(6\ell\,\Z)^{2}$ such that $y=(6\ell,0)+x$. The boxes $\Delta_{x}$ and $\Delta_{y}$ have in common a cardinal box, i.e. $x+B_{\texttt{E}}=y+B_{\texttt{W}}$. So, the condition $\xi_{x}(\o)=\xi_{y}(\o)=1$ ensures that the cardinal boxes of $\Delta_{x}$ and $\Delta_{y}$ are connected together through the restriction of $\bar{\o}$ to $\Delta_{x}\cup\Delta_{y}$. The same is true when $y=(0,6\ell)+x$. This induces a graph structure on the vertex set $V=(6\ell\,\Z)^{2}$: for any $x,y\in V$, $\{x,y\}$ belongs to the edge set $E$ if and only if
$$
y-x \in \{ \pm(6\ell,0), \pm(0,6\ell) \} ~.
$$
The graph $(V,E)$ is merely the square lattice $\Z^{2}$ with the scale factor $6\ell$. The family $\{\xi_{x},x\in V\}$ provides a site percolation process on the graph $(V,E)$. It has been built so as to satisfy the following statement.

\begin{lemme}
\label{siteperco}
Let $\o\in\O$ such that percolation occurs in the site percolation process $\{\xi_{x},x\in V\}$. Then, so does for $\o$.
\end{lemme}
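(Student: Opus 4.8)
The plan is to convert an infinite open path of the site percolation process into an unbounded connected component of the germ-grain surface $\bar\o$. Suppose $\o\in\O$ is such that $\{\xi_x,x\in V\}$ percolates, meaning that the subgraph of $(V,E)$ induced by the open sites $\{x\in V:\xi_x(\o)=1\}$ has an infinite connected component. Since $(V,E)$ is locally finite, such a component contains an infinite self-avoiding path (a standard consequence of K\"onig's lemma); fix once and for all a sequence of pairwise distinct vertices $(x_k)_{k\ge 0}$ in $V$ with $\{x_k,x_{k+1}\}\in E$ and $\xi_{x_k}(\o)=1$ for every $k$.

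First I would extract the local information carried by each open site. Since $\xi_{x_k}(\o)=\xi(\tau_{x_k}\o)=1$, condition (C1) says that each of the cardinal boxes $x_k+B_{\texttt{N}}$, $x_k+B_{\texttt{S}}$, $x_k+B_{\texttt{E}}$, $x_k+B_{\texttt{W}}$ of $\Delta_{x_k}$ contains the germ of at least one grain of $\o_{\Delta_{x_k}}$, and condition (C2) says that all these grains belong to a single connected component of $\bar\o_{\Delta_{x_k}}$; call it $C_k$. Thus $C_k\neq\emptyset$, and since all germs of $\o_{\Delta_{x_k}}$ lie in $\Delta_{x_k}=x_k+\Delta$ while grains have radius at most $R_1$, the set $C_k$ is contained in a ball of some radius $D$ about $x_k$, where $D$ depends only on $\ell$ and $R_1$. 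Moreover $\bar\o_{\Delta_{x_k}}\subseteq\bar\o$, so $C_k$ is contained in a unique connected component of $\bar\o$.

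Next I would glue consecutive pieces. Because $\{x_k,x_{k+1}\}\in E$, the vector $x_{k+1}-x_k$ belongs to $\{\pm(6\ell,0),\pm(0,6\ell)\}$, and an elementary check of the coordinates of $B_{\texttt{N}},B_{\texttt{S}},B_{\texttt{E}},B_{\texttt{W}}$ shows that $\Delta_{x_k}$ and $\Delta_{x_{k+1}}$ share exactly one cardinal box $B$ (for instance $x_k+B_{\texttt{E}}=x_{k+1}+B_{\texttt{W}}$ when $x_{k+1}=x_k+(6\ell,0)$). By (C1) applied to $\Delta_{x_k}$ there is a germ $(p,r)\in\o$ with $p\in B$; the grain $\bar B(p,r)$ is centered in a cardinal box of $\Delta_{x_k}$ \emph{and} of $\Delta_{x_{k+1}}$, and $p\in\Delta_{x_k}\cap\Delta_{x_{k+1}}$, so $\bar B(p,r)\subseteq C_k\cap C_{k+1}$. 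Hence $C_k\cap C_{k+1}\neq\emptyset$ for every $k$, and by induction all the sets $C_k$, $k\ge 0$, lie in one and the same connected component $C$ of $\bar\o$.

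It remains to see that $C$ is unbounded, which follows because the path escapes to infinity: the $x_k$ are pairwise distinct and every bounded region of $\Rd$ contains only finitely many vertices of $V$, so $\sup_k|x_k|=+\infty$; since $C\supseteq C_k$ and $C_k$ meets the ball of radius $D$ around $x_k$, the component $C$ intersects the complement of every ball centered at the origin. Thus $\bar\o$ has an unbounded connected component, i.e. percolation occurs for $\o$. The only point needing a little care is the bookkeeping that distinguishes connected components of the local surface $\bar\o_{\Delta_{x_k}}$ from those of the full surface $\bar\o$ (together with the routine verification that two adjacent diamond boxes overlap precisely on a cardinal box); beyond this the argument is straightforward, so I expect no real obstacle.
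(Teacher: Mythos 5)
Your proof is correct and takes essentially the same route the paper intends (the paper states Lemma \ref{siteperco} without a formal proof, relying on the paragraph preceding it which explains that when $\xi_x(\o)=\xi_y(\o)=1$ for adjacent $x,y$ the shared cardinal box forces the local components to connect). Your write-up simply makes this gluing argument precise: extract an infinite self-avoiding open path, use (C1) and the identity $x_k+B_{\texttt{E}}=x_{k+1}+B_{\texttt{W}}$ to find a common grain, and use (C2) to place that grain in the unique cardinal-box component $C_k$ on each side, so all the $C_k$ merge into one unbounded component of $\bar\o$.
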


Let $\Pi_{p}$ be the Bernoulli (with parameter $p$) product measure on $\{0,1\}^{V}$. A stochastic domination result of Liggett et al \cite{LSS} (Theorem 1.3) allows to compare the site percolation processes induced by the family $\{\xi_{x},x\in V\}$ and $\Pi_{p}$. Here is an adaptated version to our context. Basic definitions about stochastic domination for lattice state spaces are not recall here. They are similar to the ones presented in Section \ref{stochasticargument} for point processes. See also \cite{Grim}.

\begin{lemme}
\label{StochDomLiggett}
Let $p\in[0,1]$. Assume that, for any vertex $x\in V$,
\begin{equation}
\label{UnifEspCond}
P \left( \xi_{x}=1 \,|\, \xi_{y} : \{x,y\}\notin E \right) \geq p \; \mbox{ a.s.}
\end{equation}
Then the distribution of the family $\{\xi_{x},x\in V\}$ stochastically dominates the probability measure $\Pi_{f(p)}$, where $f:[0,1]\to[0,1]$ is a deterministic function such that $f(p)$ tends to $1$ as $p$ tends to $1$.
\end{lemme}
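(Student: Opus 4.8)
The plan is to deduce this lemma directly from Theorem 1.3 of Liggett, Schonmann and Stacey \cite{LSS}, the only work needed here being the verification that the hypotheses of that theorem are met and the translation of its conclusion into the form stated above. Recall that Theorem 1.3 of \cite{LSS} states: if $\{Y_x : x\in V\}$ is a family of $\{0,1\}$-valued random variables indexed by the vertices of a graph of bounded degree, and if there exists $p\in[0,1]$ such that, for every $x$ and every configuration of the variables $Y_y$ with $y$ not adjacent to $x$, one has $\mathbf{P}(Y_x=1 \mid Y_y, \{x,y\}\notin E)\geq p$ almost surely, then the law of $\{Y_x\}$ stochastically dominates $\Pi_{\rho(p,d)}$, a Bernoulli product measure whose parameter $\rho(p,d)$ depends only on $p$ and on the maximal degree $d$ of the graph, and satisfies $\rho(p,d)\to 1$ as $p\to 1$.

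First I would check that the graph $(V,E)$ introduced just before the statement is eligible: it is the square lattice $\Z^2$ rescaled by the factor $6\ell$, hence a graph of constant degree $d=4$, so in particular of bounded degree, and the notion of two vertices being non-adjacent is exactly the condition $\{x,y\}\notin E$ appearing in (\ref{UnifEspCond}). Second, I would observe that the family $\{\xi_x : x\in V\}$ is indeed a family of $\{0,1\}$-valued random variables on the probability space carrying $P$ (each $\xi_x$ is an indicator, by construction), so the abstract setup of \cite{LSS} applies verbatim with $Y_x=\xi_x$ and $\mathbf{P}=P$. Third, hypothesis (\ref{UnifEspCond}) of the present lemma is precisely the hypothesis required by Theorem 1.3 of \cite{LSS} for this family and this value of $p$; there is nothing to prove here beyond matching notation. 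Applying Theorem 1.3 then yields that the law of $\{\xi_x : x\in V\}$ dominates $\Pi_{\rho(p,4)}$. Setting $f(p):=\rho(p,4)$, which is a deterministic function of $p\in[0,1]$ into $[0,1]$ depending only on $p$ (the degree being fixed at $4$), and recording the monotone property $f(p)\to 1$ as $p\to 1$ guaranteed by \cite{LSS}, we obtain exactly the conclusion of the lemma.

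There is essentially no obstacle in this proof: the content is entirely in \cite{LSS}, and the role of this lemma in the paper is merely to package that result in the shape that will be convenient in the sequel, where one will then work to establish the conditional lower bound (\ref{UnifEspCond}) with $p$ close to $1$ for $z$ large. The only points requiring a word of care are (i) making explicit that the conditioning in (\ref{UnifEspCond}) is on the $\sigma$-field generated by all $\xi_y$ with $y$ non-adjacent to $x$, so that it coincides with the conditioning used in \cite{LSS}; and (ii) noting that, since $\xi_x$ depends only on $\o_{\Delta_x}$ and the boxes $\Delta_x$ for $y$ non-adjacent to $x$ need not be disjoint from $\Delta_x$, the variables $\{\xi_x\}$ are genuinely dependent and only the \emph{conditional} domination statement of \cite{LSS} (rather than an i.i.d. comparison) can be invoked — which is exactly why Theorem 1.3 of \cite{LSS}, and not a naive argument, is the right tool.
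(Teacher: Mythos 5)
Your proposal is correct and matches the paper's approach exactly: the paper does not give an independent proof of this lemma but simply presents it as ``an adaptated version'' of Theorem 1.3 in \cite{LSS}, applied to the bounded-degree graph $(V,E)$ (the rescaled square lattice, degree $4$) with $f(p)=\rho(p,4)$. Your verification that the graph has bounded degree, that the $\xi_x$ are $\{0,1\}$-valued, and that (\ref{UnifEspCond}) is the LSS hypothesis is precisely the implicit content of the paper's citation.
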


Actually, Theorem \ref{mainTH} straight derives from Lemmas \ref{siteperco} and \ref{StochDomLiggett}. Let us first recall that in the site percolation model on the graph $(V,E)$, there exists a threshold value $p^{\ast}<1$ such that percolation occurs with $\Pi_{p}$-probability $1$ whenever $p>p^{\ast}$. See the book of Grimmett \cite{Grim}, p.~25. So, let $p$ be a real number in $[0,1]$ such that
\begin{equation}
\label{f(p)}
f(p) > p^{\ast} ~.
\end{equation}
Whenever the Quermass process $P$ satisfies (\ref{UnifEspCond}) for that $p$, then combining Lemmas \ref{siteperco} and \ref{StochDomLiggett} percolation occurs $P$-a.s. Therefore it remains to show that for any  $p>0$, hypothesis (\ref{UnifEspCond}) holds for $z$ large enough.\\

The next result claims that each Borel set of $\Rd$, sufficiently thick in some sense, contains at least one element of the configuration $\o$ with a probability tending to $1$ as the intensity $z$ tends to infinity. It will be proved at the end of this section.

\begin{lemme}
\label{LemmeNotempty}
Let $V\subset\Rd$ such that there exist $U\in\B(\Rd)$ with positive Lebesgue measure and $\eps>0$ satisfying $U\oplus\bar{B}(0,R_{1}+R_{0}+\eps)\subset V$. Then there exists a constant $C>0$, depending on $\l(U)$ and $\eps$, such that for any configuration $\o\in\O$ and for any $z>0$,
$$
P \left( \o_{V} = \emptyset \,|\, \o_{V^{c}} \right) \leq C z^{-1} ~.
$$
\end{lemme}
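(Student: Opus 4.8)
The plan is to use the DLR equations to convert the conditional probability into a lower bound on a partition function. I would first observe that one may assume $V\in\B(\Rd)$: if $V$ is unbounded, set $\L:=U\oplus\bar B(0,R_{1}+R_{0}+\eps)$, which belongs to $\B(\Rd)$, is contained in $V$, and satisfies the hypothesis of the lemma with the same $U$ and $\eps$; since $\{\o_{V}=\emptyset\}\subset\{\o_{\L}=\emptyset\}$ and the $\sigma$-field generated by $\o_{V^{c}}$ is contained in the one generated by $\o_{\L^{c}}$, the tower property yields $P(\o_{V}=\emptyset\,|\,\o_{V^{c}})\le\EE[\,P(\o_{\L}=\emptyset\,|\,\o_{\L^{c}})\,|\,\o_{V^{c}}\,]$, so it suffices to treat bounded $V$. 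In that case, by the DLR description of $P$ recalled after Definition \ref{Gibbs}, the conditional law of $\o_{V}$ given $\o_{V^{c}}$ has density $g_{V}(\cdot\,|\,\o_{V^{c}})$ of (\ref{localdensity}) with respect to $\pi^{z}_{V}$. Since $\pi^{z}_{V}(\{\emptyset\})=e^{-z\l(V)}$ and, by additivity of $\A$, $\LL$ and $\x$, one has $H_{V}(\o_{V^{c}})=0$ whenever no ball is centered in $V$, this gives the exact identity
$$
P(\o_{V}=\emptyset\,|\,\o_{V^{c}})=\frac{e^{-z\l(V)}}{Z_{V}(\o_{V^{c}})}~,
$$
so everything reduces to bounding $Z_{V}(\o_{V^{c}})$ from below by a constant times $z\,e^{-z\l(V)}$, uniformly in $\o_{V^{c}}$.

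For that lower bound I would keep in the integral defining $Z_{V}(\o_{V^{c}})$ only the configurations $\o'_{V}$ made of a single marked point $(x,R)$ with germ $x\in U$ and radius $R\in[R_{0},R_{0}+\eps/2)$. The thickness hypothesis $U\oplus\bar B(0,R_{1}+R_{0}+\eps)\subset V$ is exactly what makes such a ball harmless: for $x\in U$ and any $(y,R')\in\o_{V^{c}}$ one has $y\notin\bar B(x,R_{1}+R_{0}+\eps)$, hence $\mathrm{dist}(x,\bar B(y,R'))>R_{0}+\eps>R$, so $\bar B(x,R)$ is disjoint from $\bar\o_{V^{c}}$ and appears as an isolated disc. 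Its local energy is then explicit, $h((x,R),\o_{V^{c}})=\t_{1}\pi R^{2}+2\pi\t_{2}R+\t_{3}$, and since $H_{V}(\o_{V^{c}})=0$ the full energy $H_{V}(\o_{V^{c}}\cup(x,R))$ equals this quantity, whose modulus is at most $M:=|\t_{1}|\pi R_{1}^{2}+2\pi|\t_{2}|R_{1}+|\t_{3}|$, a constant independent of $z$ and of $\o_{V^{c}}$. Writing $q:=Q([R_{0},R_{0}+\eps/2))$, which is positive by (\ref{optimalQ}), and using that under $\pi^{z}_{V}$ the event ``exactly one point, lying in $U\times[R_{0},R_{0}+\eps/2)$'' has probability $z\,\l(U)\,q\,e^{-z\l(V)}$, I get $Z_{V}(\o_{V^{c}})\ge e^{-M}\,z\,\l(U)\,q\,e^{-z\l(V)}$. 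Plugging this into the identity above gives the claim with $C=e^{M}/(\l(U)\,q)$, which depends only on $\l(U)$, on $\eps$ (through $q$), and on the fixed model parameters $\t_{1},\t_{2},\t_{3},R_{0},R_{1},Q$; as the bound does not involve $\o_{V^{c}}$, it holds for every $\o\in\O$.

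The Poisson computation, the expression for the local energy of an isolated disc, and the vanishing of $H_{V}$ on configurations empty in $V$ are all routine. The one genuinely substantive point — and the place where the ``thickness'' assumption on $V$ is essential — is the geometric verification that a ball germed in $U$ with radius close to $R_{0}$ never meets a grain of $\o_{V^{c}}$, so that it contributes only a bounded local energy; combined with the remark that (\ref{optimalQ}) forces $Q$ to charge every right-neighbourhood of $R_{0}$ (hence the one-ball configurations used in the lower bound carry positive $\pi^{z}_{V}$-mass), this is what makes the estimate work.
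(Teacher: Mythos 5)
Your proof is correct and follows essentially the same route as the paper's: reduce the conditional probability to $e^{-z\l(V)}/Z_V(\o_{V^c})$, then lower-bound the partition function by restricting to single-disc configurations germed in $U$ with radius near $R_0$, whose energy is explicit and uniformly bounded because the thickness hypothesis keeps them disjoint from $\bar\o_{V^c}$. The one small addition you make — reducing unbounded $V$ to a bounded subset before invoking DLR — is a tidy point the paper leaves implicit, and your local-energy formula $\theta_1\pi R^2+2\pi\theta_2 R+\theta_3$ fixes a harmless transcription slip in the paper's version.
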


Since the Quermass process $P$ is stationary, it is sufficient to prove (\ref{UnifEspCond}) with $x=(0,0)$. So, we focus our attention on the diamond box $\Delta=\Delta_{(0,0)}$ and use Lemma \ref{LemmeNotempty} to check that condition (C1) is fulfilled in this box. Since $B_{\texttt{N}}$, $B_{\texttt{S}}$, $B_{\texttt{E}}$ and $B_{\texttt{W}}$ are sufficiently thick (with side length $\ell>2R_{1}+2R_{0}$), it follows
$$
P \left( \o_{B_{i}} = \emptyset \,|\, \o_{\Delta^{c}} \right) = P \left( P \left( \o_{B_{i}} = \emptyset \,|\, \o_{B_{i}^{c}} \right) \,|\, \o_{\Delta^{c}} \right) \leq C z^{-1} ~,
$$
for any $i\in\{\texttt{N},\texttt{S},\texttt{E},\texttt{W}\}$. So the conditional probability that $\o$ satisfies (C1) is larger than $1-4Cz^{-1}$.\\
The equation $N_{cc}^{\Delta}(\o)=0$ forces the box $B_{\texttt{N}}$ (for instance) to be empty of points of the configuration $\o$. Hence,
$$
P \left( N_{cc}^{\Delta}(\o) = 0 \,|\, \o_{\Delta^{c}} \right) \leq C z^{-1} ~.
$$
Checking that condition (C2) is fulfilled in the diamond box $\Delta$ needs what we call the Connection Lemma (Lemma \ref{ConnectionLemma}). This result states the conditional probability that $N_{cc}^{\Delta}(\o)$ is larger than $2$ converges to $0$ uniformly on the configuration outside $\Delta$. This is the heart of the proof of Theorem \ref{mainTH}. Its technical proof is given in Section \ref{SectionConnectLemma}.

\begin{lemme}[The Connection Lemma]
\label{ConnectionLemma}
There exists a constant $C'>0$ such that for any configuration $\o\in\O$ and for any $z>0$,
\begin{equation}
\label{Ncc>=2}
P \left( N_{cc}^{\Delta}(\o) \geq 2 \,|\, \o_{\Delta^{c}} \right) \leq C' z^{-1} ~.
\end{equation}
\end{lemme}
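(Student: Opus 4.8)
## Proof Plan for the Connection Lemma

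The plan is to build on the reduction already in place: conditionally on $\o_{\Delta^c}$, the law of $\o_\Delta$ has density $g_\Delta(\cdot\mid\o_{\Delta^c})$ with respect to $\pi^z_\Delta$, so it suffices to bound $\int \1_{\{N_{cc}^\Delta\ge 2\}}\,g_\Delta(\cdot\mid\o_{\Delta^c})\,d\pi^z_\Delta$ uniformly in $\o_{\Delta^c}$ and $z$. The first reduction is combinatorial: if $N_{cc}^\Delta(\o)\ge 2$, then at least one of the four pairs of neighbouring cardinal boxes lies in two distinct connected components of $\bar\o_\Delta$ (otherwise all four boxes would be joined). By a union bound over these finitely many pairs it is enough to estimate, for a fixed pair $(B_i,B_j)$, the conditional probability of the event $D_{ij}$ that $B_i$ and $B_j$ are not connected through $\bar\o_\Delta$.

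The heart of the argument is a surgery, in the spirit of Proposition 3.1 of \cite{GH}. The octagonal shape of $\Delta$ is chosen precisely so that each pair of neighbouring cardinal boxes is joined by a fixed \emph{corridor} $\Lambda\subset\Delta$ staying away from $\Delta^c$. On $D_{ij}$, a planar separation argument produces a crossing cut of $\Lambda$ lying in $(\bar\o_\Delta)^c$; there is then an empty tube of width of order $R_0$ around this cut, and one can add a bounded chain $\Gamma$ of balls of radii in $[R_0,R_1]$ across the cut so that in $\o\cup\Gamma$ the components of $B_i$ and $B_j$ have merged, i.e. the number of cardinal-box components has strictly decreased. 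One parametrises such candidate chains $\Gamma$ by a continuum of positions sweeping the corridor.

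The crucial and most delicate step is to control the total local energy $h(\Gamma,\o)=H_\Delta(\o\cup\Gamma)-H_\Delta(\o)$ of adding such a chain, uniformly in $\o$ and $z$. The $\A$- and $\LL$-contributions are bounded by constants depending only on $\ell,R_0,R_1$ (all modifications occur inside $\Delta$, and inside a ball of radius $R_1$ the boundary $\partial\bar\o$ carries only a bounded length), and the change in the number of connected components is bounded because one added ball merges at most $O((R_1/R_0)^2)$ of them. The term $\theta_3$ times the variation of the number of \emph{holes} is the genuine obstacle: for unfortunate positions, a chain may create or destroy many holes at once. The plan here — this is the ``arduous control of the hole number variation'' — is to show that the set of \emph{admissible} positions of $\Gamma$, namely those for which the hole-number variation stays below a fixed threshold, has volume bounded below by a positive constant, uniformly in $\o$ and $z$; equivalently, a large hole-number variation can be produced only by a small (in measure) set of positions of the added grains. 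For an admissible $\Gamma$ one then gets $g_\Delta(\o\cup\Gamma\mid\o_{\Delta^c})\ge e^{-c_\ell}\,g_\Delta(\o\mid\o_{\Delta^c})$ with $c_\ell$ independent of $\o,z$.

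Finally one concludes as in the proof of Lemma \ref{LemmeNotempty}. Averaging the trivial bound $\1_{D_{ij}}(\o)\le (\mathrm{const})\int \1_{D_{ij}}(\o)\,\1[\,\Gamma\text{ admissible for }\o\,]\,d(\text{position})$ over the continuum of positions, inserting the density comparison above, and using the Mecke equation for $\pi^z_\Delta$ to reabsorb the added balls into the background Poisson process, one obtains $P(D_{ij}\mid\o_{\Delta^c})\le C_\ell z^{-1}$; the factor $z^{-1}$ reflects that $D_{ij}$ forces the configuration to leave essentially empty a cross-section of the corridor, exactly the mechanism quantified by Lemma \ref{LemmeNotempty}. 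Summing over the four pairs of neighbouring cardinal boxes yields (\ref{Ncc>=2}) for a suitable $C'>0$. The main obstacle is, as indicated, the uniform positive lower bound on the volume of admissible chain positions, i.e. the quantitative statement that large hole-number variations are measure-rare.
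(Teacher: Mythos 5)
Your proposal has the right global architecture: reduce to a surgery/comparison argument, identify the hole-number variation under $\theta_3$ as the obstruction, control it uniformly, and conclude by a single-insertion Mecke-type computation that produces the $z^{-1}$. This matches the paper's strategy, and a couple of your choices (corridors and pairs of cardinal boxes, a chain $\Gamma$ of several balls) are harmless elaborations of the paper's simpler route, which finds a point equidistant from two cardinal-box components by an intermediate-value argument on the segment $[x_1,x_2]$, observes the ball $\bar B(x,R_0)$ there is empty, covers the convex hull of the cardinal boxes by a $\kappa$-grid, and then inserts a \emph{single} ball into the empty grid box.

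The genuine gap is in the step you flag as crucial, namely that \emph{admissible positions have volume bounded below because ``a large hole-number variation can be produced only by a small (in measure) set of positions''}. This is correct for hole \emph{creation} (the case $\theta_3\le 0$): a new grain creates many holes only if it grazes $\partial\bar\o_\Delta$ in many small pieces, and the set of such grazing centres lies in thin circular strips around the arcs of $\partial\bar\o_\Delta$ whose total area is controlled via a perimeter bound (this is exactly the paper's Lemmas \ref{BorneN_t}, \ref{LocalBad} and \ref{limiteBad}). It is \emph{not} correct for hole \emph{deletion} (the case $\theta_3\ge 0$): a grain of radius $\approx R_0$ placed near a cluster of many tiny pre-existing holes of $\bar\o_\Delta$ swallows all of them, and the set of such centres has area of order $\pi R_0^2$, not small. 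Hence the ``measure-rarity'' heuristic gives no uniform-in-$\o$ bound there. The paper resolves this with a purely geometric, not measure-theoretic, argument (Lemma \ref{ExistenceBoule}): starting from the equidistant point $O'$, it explicitly constructs a point $O$ and a radius $\rho$ such that $B(O,\rho R_0)$ is empty of germs \emph{and} $B(O,(1+\rho)R_0)$ contains no hole of $\bar\o$ entirely; then no hole is deleted by any grain inserted in a $\kappa$-box near $O$, and the energy stays bounded. This case analysis on the sign of $\theta_3$ and the hole-free-ball construction is the essential content your proposal is missing; as written, your plan would not go through when $\theta_3>0$.
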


The above inequalities and the Connection Lemma imply that conditions (C1) and (C2) are fulfilled in $\Delta$ with a probability tending to $1$ as $z$ tends to $\infty$:
$$
P \left( \xi_{(0,0)}(\o) = 1 \,|\, \o_{\Delta^{c}} \right) \geq 1 - (5C+C') z^{-1} ~.
$$
The hypothesis (\ref{UnifEspCond}) then follows. Let $x$ be a vertex of the graph $(V,E)$ which is not a neighbor of $(0,0)$. By construction, the box $\Delta_{x}$ is included in $\Delta^{c}=\Delta_{(0,0)}^{c}$ (since $\Delta$ is an open set). This means the random variable $\xi_{x}$ is measurable with respect to the $\sigma$-algebra induced by the configurations restricted to $\Delta_{(0,0)}^{c}$. So,
$$
P \left( \xi_{(0,0)} = 1 \,|\, \xi_{x} : \{(0,0),x\}\notin E \right) \geq 1 - (5C+C') z^{-1} ~,
$$
and the hypothesis (\ref{UnifEspCond}) holds with $x=(0,0)$ and any $p\in[0,1[$, provided the intensity $z$ is large enough. This ends the proof of Theorem \ref{mainTH}.

\begin{proof}(Lemma \ref{LemmeNotempty})
Let $U\in\B(\Rd)$ be a bounded Borel set with positive Lebesgue measure and $V\supset U\oplus\bar{B}(0,R_{1}+R_{0}+\eps)$. First, let us write:
\begin{eqnarray}
\label{probavide}
P \left( \o_{V} = \emptyset \,|\, \o_{V^{c}} \right) & = & \frac{1}{Z_{V}(\o_{V^{c}})} \int_{\O_{V}} \1_{\o_{V}=\emptyset} \, e^{-H_{V}(\o_{V}\cup\o_{V^{c}})} \, \pi^{z}_{V}(d\o_{V}) \nonumber \\
& = & \frac{e^{-z\l(V)}}{Z_{V}(\o_{V^{c}})} ~,
\end{eqnarray}
since the empty configuration has a null energy, i.e. $H_{V}(\o_{V^{c}})=0$. A configuration $\o$ whose restriction to $V$ satisfies
$$
\#\o_{U\times[R_{0},R_{0}+\eps]}=1 \; \mbox{ and } \; \o_{V\setminus U}=\emptyset
$$
is reduced to a ball $\bar{B}(x,R)$ centered at a $x$ in $U$ and with a radius $R_{0}<R<R_{0}+\eps$. Since the ball $\bar{B}(x,R)$ does not overlap $\bar{\o}_{V^{c}}$, its energy $H_{V}((x,R)\cup\o_{V^{c}})$ is easy to compute;
$$
H_{V}((x,R)\cup\o_{V^{c}}) = \t_{1} 2\pi R + \t_{2} \pi R^{2} + \t_{3}
$$
(it is not worth using inequalities of Lemma \ref{Borneenergie} here). So, $H_{V}((x,R)\cup\o_{V^{c}})$ is bounded by a positive constant $K$ only depending on parameters $\t_{1}$, $\t_{2}$, $\t_{3}$ and radius $R_{1}$. Henceforth,
\begin{eqnarray*}
\lefteqn{P \left( \#\o_{U\times[R_{0},R_{0}+\eps]} = 1 , \; \o_{V\setminus U} = \emptyset \,|\, \o_{V^{c}} \right) }\hspace*{2cm} \\
& & = \frac{1}{Z_{V}(\o_{V^{c}})} \int_{U\times[R_{0},R_{0}+\eps]} e^{-H_{V}((x,R)\cup\o_{V^{c}})} \, z e^{-z\l(V)} \, \l(dx) Q(dR) \\
& & \geq \frac{e^{-z\l(V)}}{Z_{V}(\o_{V^{c}})} z e^{-K} \l(U) Q([R_{0},R_{0}+\eps]) ~.
\end{eqnarray*}
Recall that $Q([R_{0},R_{0}+\eps])$ is positive by (\ref{optimalQ}). 
Using the identity (\ref{probavide}), we finally upperbound the conditional probability $P(\o_{V}=\emptyset | \o_{V^{c}})$ by
$$
\left( z e^{-K} \l(U) Q([R_{0},R_{0}+\eps]) \right)^{-1} P \left( \#\o_{U\times[R_{0},R_{0}+\eps]} = 1 , \; \o_{V\setminus U} = \emptyset \,|\, \o_{V^{c}} \right) ~.
$$
This proves Lemma \ref{LemmeNotempty} with $C=(e^{-K} \l(U) Q([R_{0},R_{0}+\eps]))^{-1}$.
\end{proof}

\subsection{Proof of the Connection Lemma}
\label{SectionConnectLemma}

\subsubsection{Outline}

Let us recall that $N_{cc}^{\Delta}(\o)$ denotes the number of connected components of $\bar{\o}_{\Delta}$ having at least one ball centered in one of the four cardinal boxes $B_{\texttt{N}}$, $B_{\texttt{S}}$, $B_{\texttt{E}}$ or $B_{\texttt{W}}$. In this section, we assume $N_{cc}^{\Delta}(\o)\geq 2$. Our strategy consists in exhibiting a subset $B$ of the diamond box $\Delta$ in which $\o_{B}=\emptyset$. Moreover, for $x\in B$, if we are able to control uniformly the energy $H_{B}((x,R)\cup\o_{B^{c}})$ on $\o_{B^{c}}$, then the configuration $\o$ should contain a point centered in $B$ with large probability as $z$ tends to infinity. This leads to the Connection Lemma.

For $x\in B$, let us denote by $\mathcal{N}_{hol}((x,R),\o_{B^{c}})$ the hole number variation when the ball $\bar{B}(x,R)$ is added to the configuration $\o_{B^{c}}$. This quantity is central in our proof. Indeed, a first upperbound for the energy $H_{B}((x,R)\cup\o_{B^{c}})$ is given by Lemma \ref{Borneenergie}:
\begin{equation}
\label{firstUB}
H_{B}((x,R)\cup\o_{B^{c}}) = h((x,R),\o_{B^{c}}) \leq K - \t_{3} \, \mathcal{N}_{hol}((x,R),\o_{B^{c}}) ~,
\end{equation}
where $K$ is a positive constant only depending on parameters $\t_{1}$, $\t_{2}$, $\t_{3}$ and radii $R_{0}$, $R_{1}$. So, to upperbound the energy $H_{B}((x,R)\cup\o_{B^{c}})$ it suffices to upperbound the number of created holes (resp. deleted holes) when $\theta_{3}$ is negative (resp. positive). This is the reason why the proof of the Connection Lemma differs according to the sign of the parameter $\theta_{3}$.

\subsubsection{When $\theta_{3}$ is negative}
\label{section:theta3>0}

Let $\o$ be a configuration and $\alpha$ be a positive real number. A couple $(x,R)\in\R^{2}\times[R_{0},R_{1}]$ is said \textit{good} if all the connected components of the set $\bar{\o}_{\Delta}\cap\bar{B}(x,R)$ have an area larger than $\alpha$. These couples are well-named because adding a ball $\bar{B}(x,R)$ to the configuration $\o_{\Delta}$, with a good couple $(x,R)$, does not create too many holes.

\begin{lemme}
\label{BorneN_t}
Let $(x,R)\in\R^{2}\times[R_{0},R_{1}]$ be a good couple. Then,
$$
\mathcal{N}_{hol}((x,R),\o_{\Delta}) \leq \frac{\pi R_{1}^{2}}{\alpha} ~.
$$
\end{lemme}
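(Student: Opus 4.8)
The goal is to bound the number of holes created when a ball $\bar B(x,R)$ is added to $\o_\Delta$, given that $(x,R)$ is a good couple, i.e. every connected component of $\bar\o_\Delta\cap\bar B(x,R)$ has area at least $\alpha$. A new hole of $\bar\o_\Delta\cup\bar B(x,R)$ that was not already a hole of $\bar\o_\Delta$ must have been ``pinched off'' by the addition of $\bar B(x,R)$: its boundary is made up of arcs of $\partial\bar B(x,R)$ and pieces of $\partial\bar\o_\Delta$, and in particular such a hole must touch $\bar B(x,R)$. My first step is to make precise the intuition that each newly created hole forces the presence of at least one distinct connected component of the intersection $\bar\o_\Delta\cap\bar B(x,R)$ on its ``rim''. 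Concretely: orient $\partial\bar B(x,R)$; each connected component of $\bar\o_\Delta\cap\bar B(x,R)$ that meets $\partial\bar B(x,R)$ cuts the circle into finitely many open arcs, and the holes created are in injective (or at worst boundedly-many-to-one) correspondence with certain of these arcs, hence with the components of $\bar\o_\Delta\cap\bar B(x,R)$.

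Once that combinatorial/topological correspondence is set up, the counting step is immediate. The components of $\bar\o_\Delta\cap\bar B(x,R)$ are pairwise disjoint subsets of $\bar B(x,R)$, so their areas sum to at most $\l(\bar B(x,R))=\pi R^2\le\pi R_1^2$. By the good-couple hypothesis each such component has area $\ge\alpha$, so there are at most $\pi R_1^2/\alpha$ of them. Combining with the correspondence from the first step, the number of created holes — and hence $\mathcal N_{hol}((x,R),\o_\Delta)$, which counts created holes minus destroyed holes and is therefore at most the number of created holes — is bounded by $\pi R_1^2/\alpha$. This gives exactly the claimed inequality.

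**Main obstacle.** The delicate part is the topological bookkeeping in the first step: carefully showing that every new hole can be charged to a distinct component of $\bar\o_\Delta\cap\bar B(x,R)$ (or to a bounded number of them, which would only change the constant). One has to be a little careful about degenerate configurations — components of the intersection that do not reach $\partial\bar B(x,R)$ and so separate nothing, holes whose boundary involves several arcs of the circle, and holes of $\bar\o_\Delta$ that are merely enlarged rather than created. The clean way is to argue on the planar graph (or CW-structure) whose faces are the bounded complementary regions, and track how adding the disc $\bar B(x,R)$ changes the number of bounded faces via an Euler-characteristic / additivity argument (indeed $\x$ is additive, $\x(\bar B(x,R))=1$, and $\x(\bar\o_\Delta\cap\bar B(x,R))$ equals the number of components of the intersection minus the number of its holes); writing $\mathcal N_{hol}((x,R),\o_\Delta)$ in terms of these Euler characteristics reduces the bound to controlling the number of components of the intersection, which is what the area estimate handles. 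Everything else is routine.
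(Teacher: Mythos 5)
Your proposal is correct and follows essentially the same route as the paper: the paper also argues that the number of created holes is at most the number of connected components of $\bar\o_\Delta\cap\bar B(x,R)$, and then concludes via the area bound $\pi R^2/\alpha\le\pi R_1^2/\alpha$. The paper simply asserts the first inequality without proof, whereas your Euler-characteristic sketch (using additivity of $\x$, so that $\mathcal N_{hol}=\bigl[c(\bar\o_\Delta\cup\bar B)-c(\bar\o_\Delta)-1\bigr]+c(\bar\o_\Delta\cap\bar B)-h(\bar\o_\Delta\cap\bar B)\le c(\bar\o_\Delta\cap\bar B)$) is a clean way to justify it; the arc-counting heuristic you describe first is less reliable than the Euler-characteristic computation, but you already identify the latter as the ``clean way.''
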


\begin{proof}
The number of created holes when the ball $\bar{B}(x,R)$ is added to $\o_{\Delta}$ is smaller than the number of connected components of the set $\bar{\o}_{\Delta}\cap\bar{B}(x,R)$. Since $(x,R)$ is good, all these connected components have an area larger than $\alpha$. So, there are at most $\pi R^{2}/\alpha$ such connected components.
\end{proof}

Let us denote by $\textrm{Bad}(\o_{\Delta},\alpha)$ the following set:
$$
\textrm{Bad}(\o_{\Delta},\alpha) = \{ x \in \R^{2} , \; \exists R \in [R_{0},R_{0}+\eps] , \; (x,R) \; \mbox{ is not good }\} ~.
$$

\begin{lemme}
\label{limiteBad}
The area of the set $\textrm{Bad}(\o_{\Delta},\alpha)$ tends to $0$ as $\alpha$ and $\eps$ tend to $0$, uniformly on the configuration $\o_{\Delta}$.
\end{lemme}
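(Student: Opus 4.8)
The plan is to estimate the area of $\textrm{Bad}(\o_{\Delta},\alpha)$ by covering it with small regions attached to the ``thin parts'' of $\bar{\o}_{\Delta}$, and to show that as $\alpha$ and $\eps$ shrink these regions become negligible, with a bound that does not see the configuration at all. A point $x$ lies in $\textrm{Bad}(\o_{\Delta},\alpha)$ precisely when some ball $\bar B(x,R)$ with $R\in[R_0,R_0+\eps]$ meets $\bar{\o}_{\Delta}$ in a connected piece of area $<\alpha$. First I would observe that $\bar{\o}_{\Delta}$ is a finite union of discs contained in the bounded octagon $\Delta$, so it is a compact set whose area is at most $\l(\Delta)$ (this crude bound is already configuration-free on the relevant scale). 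The key geometric point is that a connected component $Z$ of $\bar{\o}_{\Delta}\cap\bar B(x,R)$ with small area must be ``pinched'': either it is a full small disc of $\bar{\o}_{\Delta}$ — but radii are bounded below by $R_0>0$, so a disc has area $\geq\pi R_0^2$, and for $\alpha<\pi R_0^2$ this case is impossible — or $Z$ is a thin sliver, i.e. it is contained near the boundary of $\bar B(x,R)$ where the boundary circle clips through $\bar{\o}_{\Delta}$.

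So for $\alpha<\pi R_0^2$, every bad $x$ has the property that the bounding circle $\partial\bar B(x,R)$ cuts off from $\bar{\o}_{\Delta}$ a connected cap of area $<\alpha$; such a cap forces $x$ to lie within distance roughly $R_0+\eps$ of $\bar{\o}_{\Delta}$ and, more importantly, within a controlled distance of the boundary $\partial\bar{\o}_{\Delta}$ — otherwise the circle $\partial\bar B(x,R)$ would enclose a full chunk of $\bar{\o}_{\Delta}$ of area bounded below. Quantitatively: if the cap has area $<\alpha$ then its ``depth'' inside $\bar{\o}_{\Delta}$ is at most some $\delta(\alpha)\to 0$ (for a region trapped between a circular arc of radius $\le R_1$ and nearby disc boundaries of radius $\ge R_0$, area $<\alpha$ forces linear dimensions $O(\sqrt{\alpha})$ in the transverse direction). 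Combining, $\textrm{Bad}(\o_{\Delta},\alpha)\subset (\partial\bar{\o}_{\Delta})\oplus \bar B(0,R_0+\eps+\delta(\alpha))$, and I would bound the area of this tube. The boundary $\partial\bar{\o}_{\Delta}$ is a union of circular arcs whose total length is at most the sum of the perimeters of the discs, $2\pi R_1$ per disc; the number of discs touching the octagon $\Delta$ is unbounded in general, so this naive length bound is not configuration-free — and that is the crux.

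The main obstacle, then, is getting a configuration-free bound: one cannot control the number of grains meeting $\Delta$. I would resolve it by working with the \emph{outer boundary as seen from the complement} rather than the full boundary of the union. The set $\textrm{Bad}$ is really about caps that a test circle $\partial\bar B(x,R)$ can cut off, and such caps are always accessible from $\bar{\o}_{\Delta}^{\,c}$ within $\Delta$; the relevant boundary is contained in the boundary of the ``$R_1$-accessible part'' of $\bar{\o}_{\Delta}$, whose length is controlled by an isoperimetric-type argument: any subset of the octagon reachable by a disc of radius $\le R_1$ has boundary length $\le c\,\l(\Delta)/R_0 + c'$ for universal $c,c'$ (each accessible boundary arc subtends an accessible region of area $\gtrsim R_0^2$, and these accessible regions, suitably chosen, have bounded overlap). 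With a length bound $L(\Delta)$ depending only on $\Delta$, $R_0$, $R_1$, I get
$$
\l\big(\textrm{Bad}(\o_{\Delta},\alpha)\big) \;\le\; L(\Delta)\,\big(R_0+\eps+\delta(\alpha)\big)\cdot 2 \;+\; o(1),
$$
wait — that does not go to $0$. So in fact the correct statement must use a tube only around $\partial\bar{\o}_{\Delta}$ of width $\delta(\alpha)$ (not $R_0+\eps$): a bad $x$ is within $\delta(\alpha)+\eps$ of \emph{a circle} $\partial\bar B(x,R)$ that skims $\bar{\o}_{\Delta}$, and pushing that circle against $\bar{\o}_{\Delta}$, the center $x$ is constrained to an annular neighbourhood of $\partial\bar{\o}_{\Delta}$ of width $O(\sqrt\alpha)+\eps$. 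I would therefore state the final estimate as
$$
\l\big(\textrm{Bad}(\o_{\Delta},\alpha)\big) \;\le\; L(\Delta)\,\big(C\sqrt{\alpha}+\eps\big),
$$
with $L(\Delta)$ the configuration-free boundary-length bound above, which tends to $0$ as $\alpha,\eps\to 0$ uniformly in $\o_{\Delta}$, as required. The delicate points to nail down are (i) the elementary geometric estimate that a connected intersection of area $<\alpha$ between a disc of radius $\le R_1$ and a union of discs of radii $\ge R_0$ has transverse width $O(\sqrt\alpha)$, and (ii) the configuration-free perimeter bound $L(\Delta)$ via the accessibility/bounded-overlap argument; I expect (ii) to be where the real work lies, and I would model it on the corresponding estimate in the proof of Proposition~3.1 of \cite{GH}.
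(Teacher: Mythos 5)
Your plan is essentially the paper's own argument: localize $\textrm{Bad}(\o_{\Delta},\alpha)$ to thin annular strips at distance $\approx R_0$ from the arcs of $\partial\bar{\o}_{\Delta}$ (the paper's Lemma~\ref{LocalBad} and the strips $S_g(a)$), then bound the total area by (strip width)$\times$(total arc length), and finally obtain a configuration-free bound on the arc length via a non-overlapping circular-strip/bounded-overlap argument (the paper's Lemma~\ref{BornePerimetre}, indeed in the spirit of \cite{GH}). The only quibbles are cosmetic: the transverse width forced by a lens of area $\le\alpha$ between circles of radii in $[R_0,R_1]$ is of order $\alpha^{2/3}$ rather than $\sqrt{\alpha}$ (both vanish, and your $\sqrt{\alpha}$ is the weaker, hence still valid, bound), and your ``annular neighbourhood of $\partial\bar{\o}_{\Delta}$'' should be read as the set of points at distance $\approx R_0$ from $\partial\bar{\o}_{\Delta}$, which is what the paper's circular strips $S_g(a)$ encode.
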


Lemma \ref{limiteBad} will be proved at the end of this section. Thanks to Lemmas \ref{BorneN_t} and \ref{limiteBad}, we are now able to prove the Connection Lemma. First, we need a family of (small) non-overlapping squared boxes whose union covers the convex hull of the boxes $B_{\texttt{N}}$, $B_{\texttt{S}}$, $B_{\texttt{E}}$ and $B_{\texttt{W}}$. Precisely, for $\kappa>0$, let us consider a subset $\mathcal{B}$ of $\{v+[0,\kappa[^{2},v\in\R^{2}\}$ such that for any $B,B'$ in $\mathcal{B}$, $B\cap B'$ is empty, and
$$
\textrm{Conv}\left( B_{\texttt{N}}, B_{\texttt{S}}, B_{\texttt{E}}, B_{\texttt{W}} \right) \subset \bigcup_{B\in\mathcal{B}} B \subset \Delta ~.
$$
The family $\mathcal{B}$ is made up of at most $c_{\kappa}=\kappa^{-2}\A(\Delta)$ elements.\\
The hypothesis $N_{cc}^{\Delta}(\o)\geq 2$ ensures the existence of two elements $(x_{1},\cdot)$ and $(x_{2},\cdot)$ of $\o$, whose centers $x_{1}$ and $x_{2}$ are in the union of the four cardinal boxes $B_{\texttt{N}}$, $B_{\texttt{S}}$, $B_{\texttt{E}}$ and $B_{\texttt{W}}$, and whose balls $\bar{B}(x_{1},\cdot)$ and $\bar{B}(x_{2},\cdot)$ belong to two different connected components of $\bar{\o}$, say respectively $C_{1}$ and $C_{2}$. Let $[x_{1},x_{2}]$ be the segment in $\R^{2}$ linking $x_{1}$ with $x_{2}$ and $d$ be the euclidean distance on $\R^{2}$. The continuous application
$$
f : x \in [x_{1},x_{2}] \, \mapsto \, d(x,C_{1}) - d(x,\bar{\o}\setminus C_{1})
$$
satisfies $f(x_{1})<0$ and $f(x_{2})>0$. So there exists a point $x$ in $[x_{1},x_{2}]$ such that $d(x,C_{1})$ and $d(x,\bar{\o}\setminus C_{1})$ are equal (and positive). Hence, the ball $\bar{B}(x,R_{0})$ does not contain any point of ${\o}_{\Delta}$. Moreover, since $x$ is in the convex hull of the boxes $B_{\texttt{N}}$, $B_{\texttt{S}}$, $B_{\texttt{E}}$ and $B_{\texttt{W}}$, then it belongs to one box of the family $\mathcal{B}$, say $B$. With $\kappa<R_{0}/\sqrt{2}$, the box $B$ is contained in $\bar{B}(x,R_{0})$. Consequently, $\o_B$ is empty :
\begin{equation}
\label{choixB}
P \left( N_{cc}^{\Delta}(\o) \geq 2 \,|\, \o_{\Delta^{c}} \right) \leq \sum_{B\in\mathcal{B}} P \left( \o_{B} =  \emptyset \,|\, \o_{\Delta^{c}} \right) ~.
\end{equation}
For a given box $B\in\mathcal{B}$, let us consider the (random) set $U(\o_{\Delta\setminus B})$ of points $x\in B$ such that, for any radius $R\in [R_{0},R_{0}+\eps]$, the couple $(x,R)$ is good:
$$
U(\o_{\Delta\setminus B}) = B \setminus \textrm{Bad}(\o_{\Delta\setminus B},\alpha) ~.
$$
Let $x\in U(\o_{\Delta\setminus B})$ and $R\in [R_{0},R_{0}+\eps]$. On the one hand, using (\ref{firstUB}), $\t_{3}\leq 0$ and Lemma \ref{BorneN_t}, we get
\begin{equation}
\label{secondUBpositif}
H_{B}((x,R)\cup\o_{B^{c}}) \leq K - \t_{3} M ~,
\end{equation}
where $M=M(R_{1},\alpha)$ denotes the upperbound given by Lemma \ref{BorneN_t}. On the other hand, Lemma \ref{limiteBad} implies the area of $U(\o_{\Delta\setminus B})$ is larger than $\kappa^{2}/2$ for $\alpha$ and $\eps$ small enough, uniformly on the configuration $\o_{\Delta\setminus B}$. It follows:
\begin{eqnarray*}
\lefteqn{P \left( \#\o_{B\times[R_{0},R_{0}+\eps]} = 1 \,|\, \o_{B^{c}} \right) }\hspace*{2cm} \\
& & = \frac{1}{Z_{B}(\o_{B^{c}})} \int_{B\times[R_{0},R_{0}+\eps]} e^{-H_{B}((x,R)\cup\o_{B^{c}})} \, z e^{-z\l(B)} \, \l(dx) Q(dR) \\
& & \geq \frac{z e^{-z \kappa^{2}}}{Z_{B}(\o_{B^{c}})} \int_{U(\o_{\Delta\setminus B})\times[R_{0},R_{0}+\eps]} e^{-H_{B}((x,R)\cup\o_{B^{c}})} \, \l(dx) Q(dR) \\
& & \geq \frac{z e^{-z \kappa^{2}}}{Z_{B}(\o_{B^{c}})} e^{-K+\t_{3}M} \, \frac{\kappa^{2}}{2} \, Q([R_{0},R_{0}+\eps]) ~.
\end{eqnarray*}
In the previous inequality, replacing $e^{-z \kappa^{2}} Z_{B}(\o_{B^{c}})^{-1}$ with the conditional probability $P(\o_{B}=\emptyset | \o_{B^{c}})$, we obtain
$$
P \left( \o_{B} = \emptyset \,|\, \o_{B^{c}} \right) \leq \frac{2 \, e^{K-\t_{3}M}}{z \, \kappa^{2} \, Q([R_{0},R_{0}+\eps])} ~.
$$
Finally, the Connection Lemma derives from the above upperbound, (\ref{choixB}) and with
$$
C' = \frac{2 \, c_{\kappa} \, e^{K-\t_{3}M}}{\kappa^{2} \, Q([R_{0},R_{0}+\eps])} ~.
$$

\indent
In order to prove Lemma \ref{limiteBad}, we have to locate the set $\textrm{Bad}(\o_{\Delta},\alpha)$. Lemma \ref{LocalBad} says that the points $(x,.)$ in $\textrm{Bad}(\o_{\Delta},\alpha)$ are at distance around $R_{0}$ from $\bar{\o}_{\Delta}$. We need some notations. Let $\bar{B}(x,R)$ and $\bar{B}(y,\cdot)$ be two balls satisfying $R_{0}\leq R\leq R_{0}+\eps$ and
$$
0 < \A (\bar{B}(x,R)\cap \bar{B}(y,\cdot)) \leq \alpha ~.
$$
Then there exists a positive function $g(\eps,\alpha)$ tending to $0$ as $\alpha$ and $\eps$ tend to $0$, such that
\begin{equation}
\label{DistBad}
|\, d(x,\bar{B}(y,\cdot)) - R_{0} \,| \leq g(\eps,\alpha) ~.
\end{equation}
The function $g$ is also allowed to depend on radii $R_{0}$ and $R_{1}$. The topological boundary $\partial \bar{\o}_{\Delta}$ is composed of a finite number of arcs. Let $a$ be one of them. This arc is generated by an element of the configuration $\o_{\Delta}$, say $(y,\cdot)$. Now, we can define the circular strip $S_{g}(a)$ of width $2g(\eps,\alpha)$ by
$$
S_{g}(a) = \left\{ x \in \R^{2} ; \; \exists y' \in a \mbox{ s.t. }\begin{array}{c}
x = y + \mu (y'-y) \, \mbox{ with } \, \mu > 0 \, \mbox{ and} \\
|\, d(x,y') - R_{0} \,| \leq g(\eps,\alpha)
\end{array}
\right\} ~.
$$

\begin{lemme}
\label{LocalBad}
The following inclusion holds;
\begin{equation}
\label{InclusionBad}
\textrm{Bad}(\o_{\Delta},\alpha) \subset \bigcup_{a, \, \mbox{\footnotesize{arc of} } \partial \bar{\o}_{\Delta}} S_{g}(a) ~.
\end{equation}
\end{lemme}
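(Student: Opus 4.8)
The plan is to start from a point $x\in\textrm{Bad}(\o_{\Delta},\alpha)$, extract the disc of $\o_{\Delta}$ responsible for its badness, and then read off from (\ref{DistBad}) that $x$ lies in the strip $S_{g}(a)$ of one of the arcs $a$ of $\partial\bar{\o}_{\Delta}$.

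First I would unwind the definition of $\textrm{Bad}$: there is $R\in[R_{0},R_{0}+\eps]$ and a connected component $Z$ of $\bar{\o}_{\Delta}\cap\bar{B}(x,R)$ with $\A(Z)\le\alpha$, and one may assume $\A(Z)>0$ (the only exception is a degenerate tangency component, which gives the same conclusion after an arbitrarily small change of $R$ inside $[R_{0},R_{0}+\eps]$ and which in any case concerns an $x$-set of zero Lebesgue measure, hence irrelevant for the later use of this lemma). For every $(y,r)\in\o_{\Delta}$ with $\bar{B}(y,r)\cap Z\neq\emptyset$, the set $\bar{B}(y,r)\cap\bar{B}(x,R)$ is convex, hence connected, is contained in $\bar{\o}_{\Delta}\cap\bar{B}(x,R)$ and meets $Z$, so it is contained in the component $Z$; therefore $0<\A(\bar{B}(y,r)\cap\bar{B}(x,R))\le\A(Z)\le\alpha$. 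Since moreover $R_{0}\le R\le R_{0}+\eps$, the estimate (\ref{DistBad}) applies to the pair $\bar{B}(x,R),\bar{B}(y,r)$ and yields $|d(x,\bar{B}(y,r))-R_{0}|\le g(\eps,\alpha)$; in particular $d(x,\bar{B}(y,r))\ge R_{0}-g(\eps,\alpha)>0$ once $\eps$ and $\alpha$ are small (the only regime of interest), so $x\notin\bar{B}(y,r)$.

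Next I would choose, among the finitely many discs $(y,r)\in\o_{\Delta}$ meeting $Z$ (these cover $Z$, so there is at least one), one minimizing $d^{\ast}:=d(x,\bar{B}(y,r))$, and set $y':=y+r(x-y)/|x-y|$, the point of $\bar{B}(y,r)$ nearest to $x$. It lies on the open segment $(y,x)$ with $|x-y'|=d^{\ast}$, and since $\bar{B}(y,r)\cap\bar{B}(x,R)\neq\emptyset$ we have $d^{\ast}\le R$, whence $y'\in\bar{B}(y,r)\cap\bar{B}(x,R)\subseteq Z\subseteq\bar{\o}_{\Delta}$. The key point — and the step I expect to be the crux of the argument — is that $y'$ actually lies on $\partial\bar{\o}_{\Delta}$: otherwise some $\bar{B}(y'',r'')\in\o_{\Delta}$ contains $y'$ in its interior; this disc meets $Z$ (it contains $y'\in Z$), so it is one of the competitors, and nudging $y'$ towards $x$ stays inside $\bar{B}(y'',r'')$ while strictly decreasing the distance to $x$, giving $d(x,\bar{B}(y'',r''))<|x-y'|=d^{\ast}$, against minimality. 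Hence $y'\in\partial\bar{\o}_{\Delta}\cap\partial\bar{B}(y,r)$, so $y'$ belongs to (the closure of) an arc $a$ of $\partial\bar{\o}_{\Delta}$ generated by $(y,r)$.

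To conclude I would just match the definition of $S_{g}(a)$: from $x\notin\bar{B}(y,r)$ we get $|x-y|>r$, hence $x=y+\mu(y'-y)$ with $\mu=|x-y|/r>1>0$; and $|d(x,y')-R_{0}|=|d^{\ast}-R_{0}|\le g(\eps,\alpha)$ by (\ref{DistBad}). These are exactly the two conditions for $x\in S_{g}(a)$, so $x\in S_{g}(a)\subseteq\bigcup_{a}S_{g}(a)$, which is (\ref{InclusionBad}). The only delicate points are the minimality argument above — it is precisely what forces the radial foot point $y'$ to sit on the genuine boundary $\partial\bar{\o}_{\Delta}$ rather than be buried inside another grain — and the harmless bookkeeping around degenerate zero-area components; everything else is a direct unwinding of definitions together with the already granted two-disc estimate (\ref{DistBad}).
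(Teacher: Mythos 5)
Your strategy coincides with the paper's: pick out the small-area component $Z$ of $\bar\o_{\Delta}\cap\bar{B}(x,R)$, minimize a distance over the discs (the paper works with the arcs of $\partial Z$ inside $B(x,R)$, which amounts to the same thing) bounding $Z$, and radially project $x$ onto the winner to produce a boundary point $y'\in\partial\bar\o_{\Delta}$ lying in the correct circular strip. Your convexity argument showing $\bar{B}(y,r)\cap\bar{B}(x,R)\subset Z$, hence $\A(\bar{B}(y,r)\cap\bar{B}(x,R))\le\alpha$, is in fact spelled out more carefully than in the paper, which merely asserts this bound.

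There is, however, a small gap in the step where you place $y'$ on $\partial\bar\o_{\Delta}$. You claim that if $y'$ is interior to $\bar\o_{\Delta}$ then some single disc $\bar{B}(y'',r'')$ contains $y'$ in its interior. This is false for a finite union of closed discs: take three discs of radius $R_{0}$ with centres at distance $R_{0}$ from $y'$ in the directions $0$, $2\pi/3$, $4\pi/3$; each boundary circle passes through $y'$, none of the discs contains $y'$ in its interior, yet their union covers a neighbourhood of $y'$. The repair is easy and keeps your idea intact: if $y'$ were interior to $\bar\o_{\Delta}$, an initial open piece of the segment $(y',x]$ would lie in $\bar\o_{\Delta}$ and, since the segment leaves $\bar{B}(y,r)$ radially at $y'$, outside $\bar{B}(y,r)$; that piece is therefore contained in some other $\bar{B}(y'',r'')$. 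Because this piece also lies in $\bar{B}(x,R)$ and is connected to $y'\in Z$, the disc $\bar{B}(y'',r'')$ meets $Z$, is a legitimate competitor, and satisfies $d(x,\bar{B}(y'',r''))<d^{\ast}$ --- the contradiction you wanted. With this fix your proof is complete and matches the paper's argument.
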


\begin{proof}
Let us consider a point $x$ in $\textrm{Bad}(\o_{\Delta},\alpha)$. Let $R\in [R_{0},R_{0}+\eps]$ such that $(x,R)$ is not good. So there exists a connected component of $\bar{\o}_{\Delta}\cap\bar{B}(x,R)$ of area smaller than $\alpha$. The boundary of this connected component through the open ball $B(x,R)$ is composed of a finite number of arcs, say $a_{1},\ldots,a_{n}$. Let $a$ be one of them realizing the minima
$$
d(x,a) = \min_{1\leq i\leq n} d(x,a_{i}) ~.
$$
Let $(y,\cdot)$ be the element of the configuration $\o_{\Delta}$ generating the arc $a$. Let $S(a)$ be the semi-infinite cone centered at $y$ and with arc $a$ (i.e. the union of semi-line $[y,y')$ for $y'\in a$). Then, $x$ necessarily belongs to $S(a)$. Indeed, the opposite situation could lead to the existence of another arc $a'$ satisfying $d(x,a')<d(x,a)$. To sum up, $x$ is in the semi-infinite cone $S(a)$ and the area of $\bar{B}(x,R)\cap \bar{B}(y,\cdot)$ is positive and smaller than $\alpha$. So $x$ satisfies (\ref{DistBad}) and then belongs to $S_{g}(a)$.
\end{proof}

\begin{proof}[Proof of Lemma \ref{limiteBad}]
Let $a$ be an arc of the boundary $\partial \bar{\o}_{\Delta}$. Some geometrical considerations allow to bound the area of the circular strip $S_{g}(a)$:
$$
\A(S_{g}(a)) \leq 4 g(\eps,\alpha) \textrm{length}(a) ~,
$$
where $\textrm{length}(a)$ denotes the length of the arc $a$. We deduce from this bound and Lemmas \ref{LocalBad} and \ref{BornePerimetre}:
\begin{eqnarray*}
\A( \textrm{Bad}(\o_{\Delta},\alpha) ) & \leq & \sum_{a \, \mbox{\footnotesize{arc of} } \partial \bar{\o}_{\Delta}} \A( S_{g}(a) ) \\
& \leq & 4 g(\eps,\alpha) \sum_{a \, \mbox{\footnotesize{arc of} } \partial \bar{\o}_{\Delta}} \textrm{length}(a) \\
& \leq & 4 g(\eps,\alpha) \LL_{\Delta'}( \bar{\o}_{\Delta} ) \; \; \mbox{ with } \Delta'=\Delta\oplus B(0,R_{1}) \\
& \leq & 4 g(\eps,\alpha) \frac{2 \A( \Delta' \oplus B(0,R_{0}) )}{R_{0}} ~.
\end{eqnarray*}
This latter upperbound does not depend on the configuration $\o_{\Delta}$. So, this ends the proof of Lemma \ref{limiteBad}.
\end{proof}

\subsubsection{When $\theta_{3}$ is positive}

In this section, we still assume that $N_{cc}^{\Delta}(\o)$ is larger than $2$. But this time, our aim consists in upperbounding the number of deleted holes when the ball $\bar{B}(x,R)$, $x\in B$, is added to the configuration $\o_{B^{c}}$. The existence of a suitable set $B$ derives from Lemma \ref{ExistenceBoule}. Its proof is rather long and technical, mainly because of the uniformity of $\rho>0$ with respect to the configuration $\o$.

\begin{lemme}
\label{ExistenceBoule}
Assume $N_{cc}^{\Delta}(\o)\geq 2$. There exist $\rho>0$ (which does not depend on $\o$) and $O=O(\o)\in\Delta$ such that:
\begin{itemize}
\item[$(i)$] $O$ is in $\textrm{Conv}\left( B_{\texttt{N}}, B_{\texttt{S}}, B_{\texttt{E}}, B_{\texttt{W}} \right) \oplus B(0,\frac{3}{2} R_{0})$;
\item[$(ii)$] $B(O,\rho R_{0}) \cap \o$ is empty;
\item[$(iii)$] $B(O,(1+\rho) R_{0})$ does not (totally) contain any hole of $\bar{\o}$.
\end{itemize}
\end{lemme}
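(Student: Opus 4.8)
The starting point is the same as in the negative-$\theta_3$ case: since $N_{cc}^{\Delta}(\o)\geq 2$, there are two points $x_1,x_2$ in the cardinal boxes whose balls lie in distinct connected components $C_1,C_2$ of $\bar\o$, and by the intermediate value argument applied to $f(x)=d(x,C_1)-d(x,\bar\o\setminus C_1)$ along $[x_1,x_2]$ we obtain a point $O_0$ in $\mathrm{Conv}(B_{\texttt{N}},B_{\texttt{S}},B_{\texttt{E}},B_{\texttt{W}})$ with $d(O_0,C_1)=d(O_0,\bar\o\setminus C_1)>0$; in particular $B(O_0,R_0)\cap\o=\emptyset$. The difficulty, by contrast with Lemma \ref{BorneN_t}, is now $(iii)$: we must find a ball, of radius comparable to $R_0$, that is empty of germs \emph{and} does not entirely contain any hole of $\bar\o$; a deleted hole would otherwise appear when a new grain is placed there. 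So the plan is to perturb $O_0$ slightly inside $\mathrm{Conv}(\ldots)\oplus B(0,\tfrac32 R_0)$ to avoid the (finitely many) small holes, and to prove that a uniform $\rho>0$ suffices for the avoidance.

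\textbf{Key steps.} First, fix the scale: look for $O$ in the $\tfrac12 R_0$-neighbourhood of $O_0$, so that $(i)$ holds automatically and, provided $\rho R_0<\tfrac12 R_0$, the inclusion $B(O,\rho R_0)\subset B(O_0,R_0)$ gives $(ii)$ for free. Thus only $(iii)$ is at stake, and it suffices to choose $O$ in the ball $\bar B(O_0,\tfrac12 R_0)$ so that no hole of $\bar\o$ is contained in $B(O,(1+\rho)R_0)$. Second, classify holes by size. A hole $T$ of $\bar\o$ contained in a ball of radius $(1+\rho)R_0$ has diameter at most $2(1+\rho)R_0$; there are only finitely many of them inside $\Delta$, and we only care about those meeting $\bar B(O_0,(\tfrac32+\rho)R_0)$. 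The crucial geometric observation is that a small hole is ``pinched'': its boundary is made of circular arcs of radius at least $R_0$, so a hole of small diameter $\delta$ must have small area, of order $\delta^2$, and — more to the point — can only be contained in a ball of radius $(1+\rho)R_0$ for a restricted set of centres. Concretely, if a hole $T$ is contained in $B(O,(1+\rho)R_0)$, then $O$ lies in the $((1+\rho)R_0 - \mathrm{inradius}(T))$-neighbourhood of $T$, a set whose area is $O(R_0^2)$ times a small factor when $\rho$ is small relative to the ``room'' available. Summing over the bounded number of relevant holes (bounded uniformly: their number is at most, say, $\A(\Delta\oplus B(0,R_0))$ divided by the minimal hole area $\sim R_0^2$, again using that boundary arcs have curvature radius $\geq R_0$), the set of ``forbidden'' centres $O$ has area at most some constant times $\rho$ (for $\rho$ small), hence strictly less than $\A(\bar B(O_0,\tfrac12 R_0))$ once $\rho$ is chosen small enough, \emph{uniformly in $\o$}. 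Therefore a valid $O$ exists.

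\textbf{Main obstacle.} The hard part is precisely the uniformity of $\rho$: one must bound, independently of the configuration $\o$, both the number of holes that can interfere and the area each one forbids for the centre $O$. For the count, the key input is that every arc of $\partial\bar\o_\Delta$ has radius of curvature $\geq R_0$ — exactly the fact already exploited in Lemma \ref{limiteBad} via $\LL_{\Delta'}(\bar\o_\Delta)\leq 2\A(\Delta'\oplus B(0,R_0))/R_0$ — which forces each hole to contain a disc of radius of order $R_0$ or else to be so thin that it cannot fill a ball of radius $(1+\rho)R_0$; either way the number of offending holes is $O(1)$ uniformly. For the per-hole area bound one needs a careful but elementary analysis of how large the set $\{O : T\subset B(O,(1+\rho)R_0)\}$ can be, which shrinks to a single point (the centre of the smallest enclosing ball of $T$) as $\rho\to 0$ when $\mathrm{diam}(T)$ is bounded below, and is controlled by $\rho R_0$ linearly otherwise. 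Assembling these estimates and then choosing $\rho$ so that the total forbidden area is below $\pi(\tfrac12 R_0)^2$ completes the proof; the delicate point throughout is to carry out every estimate with constants depending only on $R_0,R_1$ and the fixed geometry of $\Delta$, never on $\o$.
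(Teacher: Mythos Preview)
Your measure-theoretic shortcut breaks down precisely at the point you flag as delicate, and the gap is not repairable within this scheme.

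\medskip
\textbf{The per-hole area does not shrink with $\rho$.} For a hole $T$ of small diameter the set $\{O:T\subset B(O,(1+\rho)R_0)\}=\bigcap_{t\in T}B(t,(1+\rho)R_0)$ is approximately a full ball of radius $(1+\rho)R_0$, with area close to $\pi(1+\rho)^2R_0^2$; it does \emph{not} scale like $\rho R_0$. Since your search region $\bar B(O_0,\tfrac12 R_0)$ has area only $\pi R_0^2/4$, a \emph{single} small hole sitting inside $B(O_0,(\tfrac12+\rho)R_0)$ rules out every candidate centre. And such holes are not excluded: by Lemma~\ref{DistTrou1} a hole not containing $O_0$ is only guaranteed to be at distance $\geq\sqrt{\mathbf d^2+2\mathbf dR_0}$ from $O_0$, which is arbitrarily small when $\mathbf d=d(O_0,\bar\o)$ is small. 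Related to this, your uniform bound on the number of relevant holes via ``minimal hole area $\sim R_0^2$'' is false: three balls of radius $R_0$ at the vertices of an equilateral triangle of side $s\in(\sqrt3 R_0,2R_0)$ enclose a hole whose area tends to $0$ as $s\to\sqrt3 R_0^+$. The curvature bound on boundary arcs controls perimeter, not inradius or area of holes.

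\medskip
\textbf{What the paper does instead.} The paper's argument exploits exactly the piece of structure you discard: $O_0$ is equidistant from \emph{two different} connected components. It splits on whether $\mathbf d\geq\tfrac12 R_0$. If so, Lemma~\ref{DistTrou1} pushes every hole outside $B(O_0,(1+\rho)R_0)$ and $O=O_0$ works. If $\mathbf d<\tfrac12 R_0$, the paper locates balls $\bar B(y_1,R_0)\subset C_1$ and $\bar B(y_2,R_0)\subset\bar\o\setminus C_1$ near $O_0$, shows that the corridor $\mathcal H$ between them contains no hole, and then slides a candidate centre along a segment inside $\mathcal H$. The decisive inputs are Lemma~\ref{DistTrou2} (a germ of one component is at distance $\geq\sqrt3 R_0$ from any hole of another) and Lemma~\ref{DistTrou3} (holes of distinct components are at mutual distance $\geq 2R_0$); together they guarantee that along the slide one can stop at a centre $O$ whose $(1+\rho)R_0$-ball swallows no hole of either component. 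Your area-counting approach never invokes this two-component separation, and without it the small-hole obstruction above cannot be overcome.
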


Let us first explain how the Connection Lemma straight derives from Lemma \ref{ExistenceBoule}. As in Section \ref{section:theta3>0}, we need the family $\mathcal{B}$ of non-overlapping squared boxes of length side $\kappa$. But here, $\mathcal{B}$ is required to cover a little bit more, i.e.
\begin{equation}
\label{B2}
\textrm{Conv}\left( B_{\texttt{N}}, B_{\texttt{S}}, B_{\texttt{E}}, B_{\texttt{W}} \right) \oplus B(0,\frac{3}{2} R_{0}) \subset \bigcup_{B\in\mathcal{B}} B ~,
\end{equation}
and parameters $\kappa$ and $\eps$ are chosen small enough so that
\begin{equation}
\label{KappaEpsRho}
\sqrt{2} \kappa + \eps < \rho R_{0}
\end{equation}
(where $\rho$ is given by Lemma \ref{ExistenceBoule}). Thanks to statement $(i)$ and (\ref{B2}), the point $O$ belongs to a box $B\in\mathcal{B}$. Thanks to $(ii)$, $(iii)$ and (\ref{KappaEpsRho}), $\o_{B}$ is empty and $\bar{\o}_{B^{c}}$ has no hole in $\mathbf{B}:=B\oplus B(0,R_{0}+\eps)$. Hence,
\begin{equation}
\label{choixB2}
P \left( N_{cc}^{\Delta}(\o) \geq 2 \,|\, \o_{\Delta^{c}} \right) \leq \sum_{B\in\mathcal{B}} P \left( P \left( \o_{B} =  \emptyset \,|\, \o_{B^{c}} \right) \1_{\bar{\o}_{B^{c}} \mbox{\footnotesize{ has no hole in }} \mathbf{B}} \,|\, \o_{\Delta^{c}} \right) ~.
\end{equation}
Let us pick a box $B\in\mathcal{B}$, a couple $(x,R)\in B\times[R_{0},R_{0}+\eps]$ and assume that $\bar{\o}_{B^{c}}$ has no hole in $\mathbf{B}$. Then, no hole is deleted when $\bar{B}(x,R)$ is added to $\o_{B^{c}}$. So, the hole number variation $\mathcal{N}_{hol}((x,R),\o_{B^{c}})$ is nonnegative. Combining with $\t_{3}\geq 0$ and (\ref{firstUB}), the energy $H_{B}((x,R)\cup\o_{B^{c}})$ is smaller than $K$ and we finish the proof of the Connection Lemma as in Section \ref{section:theta3>0}. First,
$$
P \left( \#\o_{B\times[R_{0},R_{0}+\eps]} = 1 \,|\, \o_{B^{c}} \right) \geq \frac{z e^{-z \kappa^{2}}}{Z_{B}(\o_{B^{c}})} e^{-K} \, \kappa^{2} \, Q([R_{0},R_{0}+\eps]) ~.
$$
Thus, replacing $e^{-z \kappa^{2}} Z_{B}(\o_{B^{c}})^{-1}$ by the conditional probability $P(\o_{B}=\emptyset | \o_{B^{c}})$, we get
$$
P \left( \o_{B} = \emptyset \,|\, \o_{B^{c}} \right) \leq \frac{e^{K}}{z \, \kappa^{2} \, Q([R_{0},R_{0}+\eps])} ~.
$$
Finally, the Connection Lemma derives from the above upperbound, (\ref{choixB2}) and with
$$
C' = \frac{c_{\kappa} \, e^{K}}{\kappa^{2} \, Q([R_{0},R_{0}+\eps])} ~,
$$
where $c_{\kappa}$ still denotes the number of boxes contained in the family $\mathcal{B}$.\\

Now, let us find a point $O$ and a radius $\rho>0$ satisfying the three properties of Lemma \ref{ExistenceBoule}. A first applicant for the point $O$ can be obtained following the same method as in Section \ref{section:theta3>0}. Based on the hypothesis $N_{cc}^{\Delta}(\o)\geq 2$, this method ensures the existence of a point $O'$ in the convex hull of the $B_{\texttt{N}}$, $B_{\texttt{S}}$, $B_{\texttt{E}}$, $B_{\texttt{W}}$'s, such that
\begin{equation}
\label{O'}
\mathrm{\mathbf{d}} := d(O',C_{1}) = d(O',\bar{\o}_{\Delta}\setminus C_{1}) >0
\end{equation}
where $C_{1}$ denotes a connected component of $\bar{\o}_{\Delta}$ counting by $N_{cc}^{\Delta}(\o)$. Two cases will be considered in the following. In the first one-- $\mathrm{\mathbf{d}}\geq \frac{1}{2} R_{0}$ --the connected components of $\bar{\o}_{\Delta}$ are far away from $O'$. So do their holes. Then, the choice $O=O'$ is appropriate. In the second case-- $\mathrm{\mathbf{d}}\leq\frac{1}{2} R_{0}$ --we exhibit a region close to $O'$ without hole and choose a suitable point $O$ inside. About the radius $\rho$, it will be proved in the sequel that any positive real number such that
\begin{equation}
\label{rhoC1}
\left( 1 + \rho \right)^{2} \, < \, 1 + \frac{1}{4} ~,
\end{equation}
\begin{equation}
\label{rhoC3}
\sqrt{7} ( 1+\rho ) - \frac{7}{4}\, < \, 1
\end{equation}
and
\begin{equation}
\label{rhoC2}
\left( 1-\frac{\sqrt{7}}{4}+\rho \right)^{2} + \left( \frac{3}{2}-\sqrt{\left( 1-\rho \right)^{2} - \left( 1-\frac{\sqrt{7}}{4}+\rho \right)^{2}} \right)^{2} \, < \, \left( \sqrt{3} - 1 - \rho \right)^{2} ~,
\end{equation}
is suitable. For instance, $\rho=0.01$ satisfies these three conditions.\\

\textbf{Case 1:} $\mathrm{\mathbf{d}}\geq \frac{1}{2} R_{0}$.\\
\noindent
By construction, $O'$ is in the convex hull of the boxes $B_{\texttt{N}}$, $B_{\texttt{S}}$, $B_{\texttt{E}}$, $B_{\texttt{W}}$ and is at distance at least $R_{0}+\mathrm{\mathbf{d}}$ from any point  $x$ in $\o_{\Delta}$. So, it satisfies properties $(i)$ and $(ii)$ of Lemma \ref{ExistenceBoule}. Now, let us consider a hole $T$ of $\bar{\o}_{\Delta}$. Assume in a first time that $O'$ does not belong to $T$. By (\ref{rhoC1}) and Lemma \ref{DistTrou1},
$$
d(O',T)^{2} \geq \left( 1+\frac{1}{4} \right) R_{0}^{2} \geq (1+\rho)^{2} R_{0}^{2} ~.
$$
This means that the hole $T$ is outside the ball $B(O',(1+\rho) R_{0})$. Now, assume that $O'$ is in $T$. Since $O'$ is equidistant from two connected components of $\bar{\o}_{\Delta}$ then one of them is inside the hole $T$. Hence, $T$ is too large to be totally covered by the ball $B(O',(1+\rho) R_{0})$. Consequently, $O'$ also satisfies $(iii)$.\\

\textbf{Case 2:} $\mathrm{\mathbf{d}}\leq\frac{1}{2} R_{0}$.\\
\noindent
Let $\bar{B}(x_{1},R_{x_{1}})$ be a ball of the connected component $C_{1}$ on which the distance $d(O',C_{1})$ is reached. Let us consider the point $y_{1}$ on the segment $[O',x_{1}]$ satisfying $\bar{B}(y_{1},R_{0})$ is included in $\bar{B}(x_{1},R_{x_{1}})$ and
$$
d(O',\bar{B}(y_{1},R_{0})) = d(O',\bar{B}(x_{1},R_{x_{1}})) = d(O',C_{1}) = \mathrm{\mathbf{d}} ~.
$$
In the same way, let us consider a point $y_{2}$ such that $\bar{B}(y_{2},R_{0})$ is included in $\bar{\o}_{\Delta}\setminus C_{1}$ and
$$
d(O',\bar{B}(y_{2},R_{0})) = d(O',\bar{\o}_{\Delta}\setminus C_{1}) = \mathrm{\mathbf{d}} ~.
$$
The region without hole, mentioned at the beginning of the current section and which we need, is built from points $y_{1}$ and $y_{2}$. See Figure \ref{fig:nohole}. Let $\mathcal{D}$ be the infinite line passing by $y_{1}$ and $y_{2}$. Thus, let us consider two infinite lines $\mathcal{D}'$ and $\mathcal{D}''$ parallel to $\mathcal{D}$ and such that
$$
d(\mathcal{D}',\mathcal{D}) = d(\mathcal{D}'',\mathcal{D}) = \frac{\sqrt{7}}{4} R_{0}
$$
(say $O'$ and $\mathcal{D}'$ are on the same side of the line $\mathcal{D}$). We denote by $\mathcal{H}$ the intersection of the convex hull of balls $\bar{B}(y_{1},R_{0})$ and $\bar{B}(y_{2},R_{0})$ with the strip delimited by $\mathcal{D}'$ and $\mathcal{D}''$. On Figure \ref{fig:nohole}, the border of $\mathcal{H}$ is drawn in bold.

\begin{lemme}
\label{noholeH}
With the previous notations and hypotheses, there is no hole in $\mathcal{H}$.
\end{lemme}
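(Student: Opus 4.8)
The plan is to argue by contradiction. Assume that some hole $T$ of $\bar{\o}$ satisfies $T\subseteq\mathcal{H}$. Being a connected component of $\bar{\o}^{c}$, the set $T$ avoids $\bar{\o}$, in particular it avoids both closed balls $\bar{B}(y_{1},R_{0})$ and $\bar{B}(y_{2},R_{0})$; hence $T$ lies in the gap region $\mathcal{G}:=\mathcal{H}\setminus(\bar{B}(y_{1},R_{0})\cup\bar{B}(y_{2},R_{0}))$. I would first fix convenient coordinates, taking $\mathcal{D}$ as the first axis and the midpoint of $[y_{1},y_{2}]$ as the origin, so that $y_{1}=(-a,0)$ and $y_{2}=(a,0)$. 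Disjointness of $\bar{B}(y_{1},R_{0})\subseteq C_{1}$ and $\bar{B}(y_{2},R_{0})\subseteq\bar{\o}_{\Delta}\setminus C_{1}$ forces $a>R_{0}$, while $|O'-y_{1}|=|O'-y_{2}|=\mathbf{d}+R_{0}\leq\tfrac{3}{2}R_{0}$ and the triangle inequality force $a\leq\tfrac{3}{2}R_{0}$. A direct computation---this is where the strip half-width $\tfrac{\sqrt{7}}{4}R_{0}$ and the bound $a\leq\tfrac{3}{2}R_{0}$ are used---then gives $\overline{\mathcal{G}}\subseteq[-\tfrac{3}{4}R_{0},\tfrac{3}{4}R_{0}]\times[-\tfrac{\sqrt{7}}{4}R_{0},\tfrac{\sqrt{7}}{4}R_{0}]$, so $\mathrm{diam}(\overline{\mathcal{G}})\leq 2R_{0}$ and in particular $\mathrm{diam}(T)\leq 2R_{0}$. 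Observe moreover that the left (resp. right) portion of $\partial\overline{\mathcal{G}}$ lies on the circle $\partial B(y_{1},R_{0})$ (resp. $\partial B(y_{2},R_{0})$), whereas its top and bottom portions lie on $\mathcal{D}'$ and $\mathcal{D}''$.

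Next I would bring in plane topology. Since $T$ is a hole of the union of closed discs $\bar{\o}$, there is a piecewise-circular closed curve $\gamma\subseteq\partial T\subseteq\partial\bar{\o}$ that surrounds $T$; it is contained in $\overline{T}\subseteq\overline{\mathcal{G}}$ and encloses a set of positive area containing $T$. The finitely many balls of $\o$ whose bounding circles carry the arcs composing $\gamma$ overlap consecutively, so their union $W$ is a connected subset of $\bar{\o}$, hence $W$ is contained in a single connected component of $\bar{\o}$. As $\mathcal{H}\subseteq\Delta$ lies far from $\partial\Delta$, all balls of $W$ are centered in $\Delta$ and this component agrees, near $\mathcal{H}$, with a connected component of $\bar{\o}_{\Delta}$.

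The core of the proof is then a dichotomy for the balls of $W$. Such a ball has radius at least $R_{0}$, hence cannot be contained in $\overline{\mathcal{G}}$, whose diameter is at most $2R_{0}$ and which is not a disc; since its supporting arc lies in $\overline{\mathcal{G}}$, the ball must reach across one of the (at most four) portions of $\partial\overline{\mathcal{G}}$. A ball reaching across the left portion meets $\bar{B}(y_{1},R_{0})$, hence belongs to $C_{1}$; a ball reaching across the right portion meets $\bar{B}(y_{2},R_{0})$, hence belongs to $\bar{\o}_{\Delta}\setminus C_{1}$; across the top or bottom portion no constraint follows. Because $\gamma$ is a closed curve enclosing a positive area inside the thin set $\overline{\mathcal{G}}$, which is ``pinched'' between the two balls, I would show---by inspecting the leftmost and the rightmost points of $\gamma$, and using Lemma \ref{DistTrou1} to bound how far into $\mathcal{G}$ the curve $\gamma$ can reach---that $W$ necessarily contains both a ball reaching across the left portion and a ball reaching across the right portion. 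This forces $C_{1}=\bar{\o}_{\Delta}\setminus C_{1}$, which is absurd; hence no hole of $\bar{\o}$ is contained in $\mathcal{H}$.

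The step I expect to be the main obstacle is this last one: proving uniformly in $\o$ that the enclosing curve $\gamma$ cannot, say, stay clear of $\bar{B}(y_{2},R_{0})$ and leave $\overline{\mathcal{G}}$ only through $\mathcal{D}'\cup\mathcal{D}''$, and that the region it encloses hides no further ``island'' ball of $\o$. This is an elementary but delicate plane-geometry estimate; the three numerical conditions (\ref{rhoC1})--(\ref{rhoC2}) on $\rho$---equivalently the exact value $\tfrac{\sqrt{7}}{4}$ of the half-width together with the case assumption $\mathbf{d}\leq\tfrac{1}{2}R_{0}$---are precisely what make all the estimates close, and the island possibility is again ruled out by the bound $\mathrm{diam}(\overline{\mathcal{G}})\leq 2R_{0}$.
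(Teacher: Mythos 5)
The paper proves this lemma with a short metric estimate: the hole $T$ of $\bar{\o}_{\Delta}$ nearest to the segment $[y_{1},y_{2}]$ arises by pressing a ball of radius $R_{0}$ against $\bar{B}(y_{1},R_{0})$ and $\bar{B}(y_{2},R_{0})$, and Pythagoras then gives $d(T,[y_{1},y_{2}])\geq\tfrac{1}{2}\sqrt{(2R_{0})^{2}-(R_{0}+h)^{2}}$ with $h=\tfrac{1}{2}d(y_{1},y_{2})-R_{0}\leq\mathbf{d}\leq\tfrac{1}{2}R_{0}$, i.e.\ at least $\tfrac{\sqrt{7}}{4}R_{0}$, exactly the half-width of the strip; the two balls themselves are covered by $\bar{\o}_{\Delta}$. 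You propose a genuinely different, topological route (enclosing curve $\gamma$, connected chain $W$ of carrier balls, dichotomy on which portion of $\partial\overline{\mathcal{G}}$ each ball of $W$ crosses), and your preparatory steps are sound: the containment $\overline{\mathcal{G}}\subseteq[-\tfrac{3}{4}R_{0},\tfrac{3}{4}R_{0}]\times[-\tfrac{\sqrt{7}}{4}R_{0},\tfrac{\sqrt{7}}{4}R_{0}]$, hence $\mathrm{diam}(\overline{\mathcal{G}})\leq 2R_{0}$, and the remark that a ball of radius at least $R_{0}$ whose bounding arc lies in $\overline{\mathcal{G}}$ must exit it, are all correct.

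But the proof is not complete, and the missing piece is precisely the geometric content of the lemma. The claim that $W$ must contain one ball crossing the left portion of $\partial\overline{\mathcal{G}}$ (hence meeting $\bar{B}(y_{1},R_{0})$) \emph{and} one crossing the right portion (hence meeting $\bar{B}(y_{2},R_{0})$), rather than $W$ exiting only through $\mathcal{D}'\cup\mathcal{D}''$, is asserted, not proved, and you yourself flag it as ``the main obstacle''. This is where a quantitative estimate is unavoidable: one needs to show that a carrier ball bounding, say, the right side of $\gamma$ while avoiding $\bar{B}(y_{2},R_{0})$ cannot curve back to close the loop inside $\overline{\mathcal{G}}$; that estimate is essentially the paper's pressing-ball computation, which your plan bypasses rather than replaces. (The same applies to balls of $\o$ hidden inside the region bounded by $\gamma$, which you mention but do not rule out.) There is also a subtler mismatch: you assume $T\subseteq\mathcal{H}$ to start the contradiction, so at best you would obtain that no hole is \emph{contained} in $\mathcal{H}$; but the way the lemma is used afterwards (to say that $\mathcal{C}$ is pressed against $T$, and that a hole of $C_{1}$ inside $B(O,(1+\rho)R_{0})$ must then lie in $\mathcal{C}$) requires the stronger statement that no hole \emph{meets} $\mathcal{H}$, which is what the paper's distance bound delivers. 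A hole meeting $\mathcal{G}$ but extending past $\mathcal{D}'$ escapes both your diameter bound and your dichotomy, so the by-contradiction hypothesis must be strengthened before the argument can even start.
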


\begin{figure}[!ht]
\begin{center}
\psfrag{D1}{\small{$\mathcal{D}''$}}
\psfrag{D2}{\small{$\mathcal{D}$}}
\psfrag{D3}{\small{$\mathcal{D}_{O}$}}
\psfrag{D4}{\small{$\mathcal{D}'$}}
\psfrag{y1}{\small{$y_{1}$}}
\psfrag{y2}{\small{$y_{2}$}}
\psfrag{z1}{\small{$z_{1}$}}
\psfrag{z2}{\small{$z_{2}$}}
\psfrag{h1}{\small{$h_{1}$}}
\psfrag{M}{\small{$M$}}
\psfrag{O}{\small{$O$}}
\psfrag{O'}{\small{$O'$}}
\psfrag{H}{$\mathcal{H}$}
\psfrag{T}{$T$}
\psfrag{C1}{$C_{1}$}
\psfrag{C2}{$\bar{\o}_{\Delta}\setminus C_{1}$}
\includegraphics[width=12cm,height=6cm]{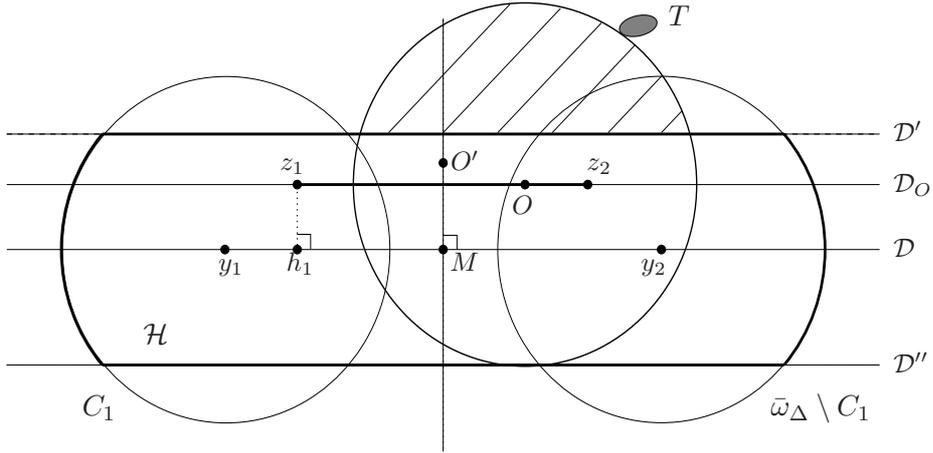}
\end{center}
\caption{\label{fig:nohole} \small{The balls $\bar{B}(y_{1},R_{0})$ and $\bar{B}(y_{2},R_{0})$ are respectively contained in the connected component $C_{1}$ and in $\bar{\o}_{\Delta}\setminus C_{1}$. From these balls a point $O$ is built and a real number $\rho>0$ is exhibited, satisfying together the three properties of Lemma \ref{ExistenceBoule}.}}
\end{figure}

\begin{proof}[Proof of Lemma \ref{noholeH}]
The closest hole $T$ to the segment $[y_{1},y_{2}]$ is obtained by pressing a ball with radius $R_{0}$ against $\bar{B}(y_{1},R_{0})$ and $\bar{B}(y_{2},R_{0})$. If $l$ denotes the distance between $T$ and $[y_{1},y_{2}]$ then $2l$ is the distance between the center of this pressing ball and $[y_{1},y_{2}]$. 
Pythagoras Theorem gives $(2l)^{2}+(R_{0}+h)^{2}=(2R_{0})^{2}$ in which $h$ denotes
$$
h := \frac{1}{2} d(y_{1},y_{2}) - R_{0} \, \leq \, \mathrm{\mathbf{d}} ~.
$$
In the worst case, $h=\frac{1}{2} R_{0}$. Hence, $l$ is always larger than $\frac{\sqrt{7}}{4} R_{0}$, which is the distance between $\mathcal{D}$ and $\mathcal{D}'$. To complete the proof, let us add there is no hole in the balls $\bar{B}(y_{1},R_{0})$ and $\bar{B}(y_{2},R_{0})$ since they are totally covered by $\bar{\o}_{\Delta}$.
\end{proof}

The idea to conclude the proof can be sum up as follows. The region $\mathcal{H}$ is sufficiently thick to contain strictly more than half of a ball with radius $(1+\rho)R_{0}$. Hence, the part of this ball outside $\mathcal{H}$ (this is the hatched region on Figure \ref{fig:nohole}) has a diameter smaller than $2R_{0}$. Thanks to Lemma \ref{DistTrou3}, it is possible to choose the center $O$ of this ball so that $\bar{B}(O,(1+\rho)R_{0})\cap \mathcal{H}^{c}$ does not contain any hole.\\
Let $\mathcal{D}_{O}$ be the infinite line parallel to $\mathcal{D}''$, at distance $(1+\rho)R_{0}$ from $\mathcal{D}''$ and on the same side as $\mathcal{D}$ of the line $\mathcal{D}''$. It derives from (\ref{rhoC3}) that the line $\mathcal{D}_{O}$ is trapped between $\mathcal{D}$ and $\mathcal{D}'$. Let $M$ be the center of the segment $[y_{1},y_{2}]$. Let us denote by $[z_{1},z_{2}]$ the following segment:
$$
[z_{1},z_{2}] := \bar{B}( M,(1-\rho)R_{0} ) \cap \mathcal{D}_{O} ~.
$$
See Figure \ref{fig:nohole}. We are going to choose the point $O$ on the segment $[z_{1},z_{2}]$. To do it, some geometrical results about the previous construction are needed. They will be proved at the end of the section:

\begin{lemme}
\label{3results}
With the previous notations and hypotheses, the following statements hold:
\begin{itemize}
\item[$(a)$] for $i=1,2$, $d(O',z_{i})\leq \frac{3}{2}R_{0}$;
\item[$(b)$] $[z_{1},z_{2}]\oplus B(0,\rho R_{0}) \subset B(O',R_{0}+\mathrm{\mathbf{d}})$;
\item[$(c)$] for $i=1,2$, $d(y_{i},z_{i}) \leq (\sqrt{3}-1-\rho) R_{0}$.
\end{itemize}
\end{lemme}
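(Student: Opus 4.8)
The plan is to work in an explicit orthonormal frame adapted to the construction of Figure~\ref{fig:nohole} and then to verify $(a)$, $(b)$, $(c)$ by elementary planar geometry; the only non-elementary ingredient will be condition~(\ref{rhoC2}), which is tailored precisely for $(c)$. Normalise $R_{0}=1$. Put the midpoint $M$ of $[y_{1},y_{2}]$ at the origin and the line $\mathcal{D}$ on the first coordinate axis, so that $y_{1}=(-a,0)$, $y_{2}=(a,0)$ with $a=\frac{1}{2}d(y_{1},y_{2})$. Two preliminary facts pin down the picture. Since $\bar{B}(y_{1},1)\subset C_{1}$ and $\bar{B}(y_{2},1)\subset\bar{\o}_{\Delta}\setminus C_{1}$ are disjoint, $a>1$; and since $\bar{B}(y_{i},1)$ lies at distance $\mathrm{\mathbf{d}}$ from $O'$ and has radius $1$, we have $d(O',y_{i})=1+\mathrm{\mathbf{d}}$ for $i=1,2$, so the triangle inequality $d(y_{1},y_{2})\le d(y_{1},O')+d(O',y_{2})$ gives $a\le1+\mathrm{\mathbf{d}}$; as $\mathrm{\mathbf{d}}\le\frac{1}{2}$ in Case~2, this yields $1<a\le1+\mathrm{\mathbf{d}}\le\frac{3}{2}$. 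Because $O'$ is equidistant from $y_{1}$ and $y_{2}$ it lies on the second axis; write $O'=(0,t)$ with $t\ge0$ (the side of $\mathcal{D}'$). Then $a^{2}+t^{2}=(1+\mathrm{\mathbf{d}})^{2}$, whence $0\le t^{2}=(1+\mathrm{\mathbf{d}})^{2}-a^{2}\le(1+\mathrm{\mathbf{d}})^{2}-1\le\frac{5}{4}$. Finally $\mathcal{D}_{O}$ is the horizontal line of ordinate $y_{O}:=1+\rho-\frac{\sqrt{7}}{4}>0$, and $[z_{1},z_{2}]=\bar{B}(M,1-\rho)\cap\mathcal{D}_{O}$ gives $z_{1}=(-s,y_{O})$, $z_{2}=(s,y_{O})$ with $s:=\sqrt{(1-\rho)^{2}-y_{O}^{2}}$ (the radicand is nonnegative, which is already part of the content of~(\ref{rhoC2})); thus $0\le s<1-\rho<1<a$.

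For $(a)$ and $(b)$ the key identity is $d(O',z_{i})^{2}=s^{2}+(t-y_{O})^{2}=(1-\rho)^{2}+t^{2}-2ty_{O}$, using $s^{2}+y_{O}^{2}=(1-\rho)^{2}$. Statement $(a)$ is then immediate from $(1-\rho)^{2}\le1$, $t^{2}\le\frac{5}{4}$ and $ty_{O}\ge0$, which give $d(O',z_{i})^{2}\le\frac{9}{4}$. For $(b)$, since $O'$ lies on the perpendicular bisector of the horizontal segment $[z_{1},z_{2}]$, the point of $[z_{1},z_{2}]$ farthest from $O'$ is an endpoint, so it suffices to prove $d(O',z_{i})\le1+\mathrm{\mathbf{d}}-\rho$, i.e.\ $(1-\rho)^{2}+t^{2}-2ty_{O}\le(1+\mathrm{\mathbf{d}}-\rho)^{2}$. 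I would view the difference of the two sides as a function of $t$ on $[0,\sqrt{(1+\mathrm{\mathbf{d}})^{2}-1}]$; it is a quadratic with positive leading coefficient, hence convex, so it is enough to check the two endpoints. At $t=0$ it equals $(1-\rho)^{2}-(1+\mathrm{\mathbf{d}}-\rho)^{2}<0$; at $t=\sqrt{(1+\mathrm{\mathbf{d}})^{2}-1}$ it simplifies, after expansion, to $2\mathrm{\mathbf{d}}\rho-2y_{O}\sqrt{(1+\mathrm{\mathbf{d}})^{2}-1}$, which is $\le0$ because $\sqrt{(1+\mathrm{\mathbf{d}})^{2}-1}=\sqrt{\mathrm{\mathbf{d}}^{2}+2\mathrm{\mathbf{d}}}\ge\mathrm{\mathbf{d}}$ and $y_{O}\ge\rho$ (equivalently $\sqrt{7}\le4$). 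The inclusion in $(b)$ follows at once: a point of $[z_{1},z_{2}]\oplus B(0,\rho)$ is within $\rho$ of some point of $[z_{1},z_{2}]$, hence within $1+\mathrm{\mathbf{d}}$ of $O'$.

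For $(c)$, pairing $y_{i}$ with the endpoint $z_{i}$ of the same sign of first coordinate gives $d(y_{i},z_{i})^{2}=(a-s)^{2}+y_{O}^{2}$; since $0\le s<1<a\le\frac{3}{2}$ we have $0<a-s\le\frac{3}{2}-s$, hence $d(y_{i},z_{i})^{2}\le\bigl(\frac{3}{2}-\sqrt{(1-\rho)^{2}-y_{O}^{2}}\bigr)^{2}+y_{O}^{2}$, and this last quantity is exactly the left-hand side of~(\ref{rhoC2}) (recall $y_{O}=1-\frac{\sqrt{7}}{4}+\rho$), so it is $<(\sqrt{3}-1-\rho)^{2}$; thus $d(y_{i},z_{i})\le\sqrt{3}-1-\rho$. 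In all three statements the case $i=2$ is identical to $i=1$ because $d(O',z_{1})=d(O',z_{2})$ and $d(y_{1},z_{1})=d(y_{2},z_{2})$: the six points $O'$, $M$, $y_{1}$, $y_{2}$, $z_{1}$, $z_{2}$ are placed symmetrically with respect to the second axis.

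The only real difficulty here is bookkeeping: fixing the frame and the elementary constraints $1<a\le1+\mathrm{\mathbf{d}}\le\frac{3}{2}$ and $t^{2}=(1+\mathrm{\mathbf{d}})^{2}-a^{2}$ correctly, and, for $(b)$, noticing that the worst value of $t$ is an endpoint of its admissible range, so that the convexity of a single quadratic replaces any optimisation. Condition~(\ref{rhoC2}) enters only through $(c)$ and is sharp in the extreme configuration $a=\frac{3}{2}$; conditions~(\ref{rhoC1}) and~(\ref{rhoC3}) play no role in this lemma (they are used elsewhere, respectively to treat Case~1 and to place $\mathcal{D}_{O}$ between $\mathcal{D}$ and $\mathcal{D}'$).
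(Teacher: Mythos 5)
Your proof is correct. For statement~$(c)$ your argument coincides with the paper's: project $z_{1}$ onto $\mathcal{D}$, apply Pythagoras with $d(M,z_1)=(1-\rho)R_0$, $d(h_1,z_1)=y_{O}$ and $\mathrm{\mathbf{d}}\le\tfrac12 R_0$, then invoke~(\ref{rhoC2}). For~$(a)$ and~$(b)$, however, you take a genuinely more computational route. The paper argues synthetically: the half of $\bar{B}(M,R_{0})$ lying on the $O'$-side of $\mathcal{D}$ (call it $\mathcal{V}$) is contained in $\bar{B}(O',R_{0}+\mathrm{\mathbf{d}})$, and since $z_{1},z_{2}$ and the balls $\bar{B}(z_{i},\rho R_{0})$ sit inside $\mathcal{V}$, both $(a)$ and $(b)$ follow at once. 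You instead fix coordinates, compute $d(O',z_{i})^{2}=(1-\rho)^{2}R_0^2+t^{2}-2ty_{O}$, and for $(b)$ reduce the required bound $d(O',z_i)\le R_0+\mathrm{\mathbf{d}}-\rho R_0$ to the non-positivity of a quadratic in $t$ on $[0,\sqrt{(R_0+\mathrm{\mathbf{d}})^{2}-R_0^2}]$, which you verify by checking the two endpoints. Both arguments are sound; in fact the paper's half-ball inclusion, translated into your frame, amounts to the statement $R_0^{2}+t^{2}\le(R_0+\mathrm{\mathbf{d}})^{2}$, which is exactly your constraint $a\ge R_0$ (disjointness of $\bar{B}(y_{1},R_{0})$ and $\bar{B}(y_{2},R_{0})$), a dependence you make explicit where the paper leaves it implicit. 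Your closing remark that only~(\ref{rhoC2}) among the three $\rho$-conditions enters this lemma is also correct.
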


By convexity and statement $(a)$, any point of the segment $[z_{1},z_{2}]$ is at distance from $O'$ larger than $\frac{3}{2}R_{0}$. Moreover, $O'$ is in the convex hull of the $B_{\texttt{N}}$, $B_{\texttt{S}}$, $B_{\texttt{E}}$, $B_{\texttt{W}}$'s. Then, any point of $[z_{1},z_{2}]$ satisfies the property $(i)$ of Lemma \ref{ExistenceBoule}.\\
By construction of the point $O'$, the ball $B(O',R_{0}+\mathrm{\mathbf{d}})$ does not contain any point of $\o$. So does the set $[z_{1},z_{2}]\oplus B(0,\rho R_{0})$ thanks to statement $(b)$. This means that any point of the segment $[z_{1},z_{2}]$ satisfies the property $(ii)$ of Lemma \ref{ExistenceBoule}.\\
Combining statement $(c)$ with $i=1$ and Lemma \ref{DistTrou2}, we check there is no hole of $\bar{\o}_{\Delta}\setminus C_{1}$ in the ball $B(z_{1},(1+\rho) R_{0})$. Let us run the center of a ball with radius $(1+\rho) R_{0}$ along the segment $[z_{1},z_{2}]$ from $z_{1}$ to $z_{2}$ until that ball meets a hole of $\bar{\o}_{\Delta}\setminus C_{1}$. Two cases can be distinguished.
\begin{itemize}
\item[$\bullet$] This meet does not happen. Then, the ball $B(z_{2},(1+\rho) R_{0})$ does not contain any hole of $\bar{\o}_{\Delta}\setminus C_{1}$. It does not contain any hole of $C_{1}$ either thanks to statement $(c)$ with $i=2$ and Lemma \ref{DistTrou2}. In this case, $O=z_{2}$ satisfies the property $(iii)$ of Lemma \ref{ExistenceBoule}.
\item[$\bullet$] This meet happens: let $O$ be the corresponding center and $T$ be the corresponding hole of $\bar{\o}_{\Delta}\setminus C_{1}$. As just before, the ball $B(O,(1+\rho) R_{0})$ does not still contain any hole of $\bar{\o}_{\Delta}\setminus C_{1}$. Denote by $\mathcal{C}$ the part of this ball outside $\mathcal{H}$:
$$
\mathcal{C} := B(O,(1+\rho) R_{0}) \cap \mathcal{H}^{c} ~.
$$
On the one hand, the diameter of $\mathcal{C}$ is smaller than $2R_{0}$ thanks to (\ref{rhoC3}). On the other hand, $\mathcal{C}$ is pressed against the hole $T$ (there is no hole in $\mathcal{H}$); see Figure \ref{fig:nohole}. By Lemma \ref{DistTrou3}, the holes of the connected component $C_{1}$ are at distance from $T$ at least $2R_{0}$. So they cannot belong to the set $\mathcal{C}$. Therefore, this point $O$ satisfies the property $(iii)$ of Lemma \ref{ExistenceBoule}.
\end{itemize}

\begin{proof}[Proof of Lemma \ref{3results}]
The infinite line $\mathcal{D}$ divides $\bar{B}(M,R_{0})$ into two half-balls; let $\mathcal{V}$ be the one containing the segment $[z_{1},z_{2}]$. Since
$$
d(O',y_{1}) = d(O',y_{2}) = R_{0}+\mathrm{\mathbf{d}} ~,
$$
the half-ball $\mathcal{V}$ is included in the ball with center $O'$ and radius $R_{0}+\mathrm{\mathbf{d}}$. This inclusion admits two consequences. First, the points $z_{1}$ and $z_{2}$ which are in $\mathcal{V}$, are also in the ball $\bar{B}(O',R_{0}+\mathrm{\mathbf{d}})$. This implies, for $i=1,2$
$$
d(O',z_{i}) \leq \frac{3}{2} R_{0} ~,
$$
i.e. statement $(a)$. Second, the balls $\bar{B}(z_{i},\rho R_{0})$ which are included in $\mathcal{V}$, are also included in $\bar{B}(O',R_{0}+\mathrm{\mathbf{d}})$. So does the set $[z_{1},z_{2}]\oplus B(0,\rho R_{0})$ by convexity. Statement $(b)$ is proved. It remains to prove statement $(c)$. Let us introduce the orthogonal projection $h_{1}$ of $z_{1}$ over the infinite line $\mathcal{D}$ (see Figure \ref{fig:nohole}). Using $d(M,z_{1})=(1-\rho)R_{0}$, $d(h_{1},z_{1})=(1+\rho-\frac{\sqrt{7}}{4})R_{0}$ and $\mathrm{\mathbf{d}}\leq\frac{1}{2} R_{0}$, we get
$$
d(y_{1},z_{1}) \leq \sqrt{\left( 1-\frac{\sqrt{7}}{4}+\rho \right)^{2} + \left( \frac{3}{2}-\sqrt{\left( 1-\rho \right)^{2} - \left( 1-\frac{\sqrt{7}}{4}+\rho \right)^{2}} \right)^{2}} \, R_{0} ~.
$$
Thanks to (\ref{rhoC2}), statement $(c)$ follows.
\end{proof}

\subsection{Proofs of geometrical lemmas}
\label{SectionGeoLemma}


\begin{lemme}
\label{BornePerimetre}
Let $\Delta$ be a bounded closed convex set. For any configuration $\o$, let us denote by $\LL_\Delta(\bar\o)$ the perimeter of $\bar\o$ viewed through $\Delta$:
$$
\LL_\Delta(\bar\o) = \LL(\bar \o \cap \Delta) - \textrm{length}(\partial \Delta \cap \bar \o),
$$
where $\textrm{length}(\partial \Delta \cap \bar \o)$ denotes the lentgh of the boundary of $\Delta$ which is inside the set $\bar \o$. 
Then,
$$
\LL_\Delta(\bar\o) \leq \frac{2 \A(\Delta\oplus B(0,R_0))}{R_0} ~.
$$
\end{lemme}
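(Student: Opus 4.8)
The plan is to read $\LL_\Delta(\bar\o)$ as (at most) the total length of $\partial\bar\o$ inside $\Delta$, and to charge each piece of this boundary against an inward ``sausage'' of radial depth $R_0$: these sausages will be pairwise essentially disjoint, all contained in $\Delta\oplus B(0,R_0)$, and of area at least $\tfrac{R_0}{2}$ times the length of the piece. Concretely, since $\Delta$ is bounded closed convex and $\o$ is locally finite, $\partial\bar\o\cap\Delta$ is a finite union of maximal circular arcs, each carried by a circle $\partial\bar B(x_a,R_a)$ with $R_a\in[R_0,R_1]$ (here $R_0>0$, which is what makes the bound meaningful). Comparing $\partial(\bar\o\cap\Delta)$ with $(\partial\bar\o\cap\Delta)\cup(\partial\Delta\cap\bar\o)$ yields $\LL_\Delta(\bar\o)\le\textrm{length}(\partial\bar\o\cap\Delta)=\sum_a\textrm{length}(a)$.

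To each such arc $a\subset\partial\bar B(x_a,R_a)$ I attach the annular sector
$$
S(a)=\bigl\{\, x_a+t(p-x_a)\;:\;p\in a,\ 1-R_0/R_a\le t\le 1\,\bigr\},
$$
obtained by pushing $a$ radially towards $x_a$ by a depth at most $R_0$; this never reaches $x_a$ because $R_a\ge R_0$. Then $S(a)\subset\bar B(x_a,R_a)\subset\bar\o$, and as each of its points is at distance $(1-t)R_a\le R_0$ from $a$, also $S(a)\subset a\oplus\bar B(0,R_0)\subset\Delta\oplus\bar B(0,R_0)$. Moreover, in polar coordinates centred at $x_a$, $S(a)$ is an annular sector with radii between $R_a-R_0$ and $R_a$ and angular width $\textrm{length}(a)/R_a$, so
$$
\A(S(a))=\frac{\textrm{length}(a)}{R_a}\cdot\frac{R_a^2-(R_a-R_0)^2}{2}=\textrm{length}(a)\,R_0\Bigl(1-\frac{R_0}{2R_a}\Bigr)\ \ge\ \frac{R_0}{2}\,\textrm{length}(a).
$$

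The key point — and the step I expect to be the main obstacle — is the disjointness of these sausages, via a ``rolling ball of radius $R_0$'' argument; this is also exactly where $R_0>0$ enters. Let $S^{\circ}(a)$ be the subset of $S(a)$ with $p\in\textrm{relint}(a)$ and $t\in(1-R_0/R_a,1)$. Fix $y\in S^{\circ}(a)$ and let $p\in\textrm{relint}(a)$ be the point of $a$ on the radius through $y$, so $s:=|y-p|\in(0,R_0)$ and $|y-p|=R_a-|y-x_a|$. Then every $z$ with $|z-y|<s$ has $|z-x_a|\le s+|y-x_a|=R_a$, i.e. $B(y,s)\subset\bar B(x_a,R_a)\subset\bar\o$; being open and contained in $\bar\o$, $B(y,s)$ meets no point of $\partial\bar\o$, whence $d(y,\partial\bar\o)=s$, realised at $p$ and \emph{only} at $p$ (a second realiser would lie on $\partial\bar B(x_a,R_a)\cap\partial B(y,s)$, two circles internally tangent at $p$). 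Thus $y$ determines $p$ intrinsically, as its unique nearest point on $\partial\bar\o$, and $p$ lies in the relative interior of a unique one of our arcs; so $y$ cannot belong to $S^{\circ}(a')$ for $a'\neq a$.

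Finally, $S(a)\setminus S^{\circ}(a)$ is the image of a one–dimensional set, hence negligible, so $\A(S(a))=\A(S^{\circ}(a))$; the $S^{\circ}(a)$ being pairwise disjoint and all contained in $\Delta\oplus\bar B(0,R_0)$ (which has the same area as $\Delta\oplus B(0,R_0)$), we get
$$
\frac{R_0}{2}\,\LL_\Delta(\bar\o)\ \le\ \sum_a\frac{R_0}{2}\,\textrm{length}(a)\ \le\ \sum_a\A\bigl(S^{\circ}(a)\bigr)\ =\ \A\Bigl(\bigcup_a S^{\circ}(a)\Bigr)\ \le\ \A\bigl(\Delta\oplus B(0,R_0)\bigr),
$$
which is the announced inequality. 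Equivalently one may invoke the area formula for $(p,s)\mapsto x_a+(1-s/R_a)(p-x_a)$, whose Jacobian $1-s/R_a$ integrates over $s\in[0,R_0]$ to at least $R_0/2$; but bounding the multiplicity there still requires the injectivity just proved, so the sausage disjointness remains the crucial ingredient.
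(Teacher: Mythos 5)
Your proof is correct and follows essentially the same strategy as the paper's: decompose $\partial\bar\o\cap\Delta$ into finitely many circular arcs, attach to each the inward radial ``sausage'' $S(a)$ of depth $R_0$, show the sausages are (essentially) pairwise disjoint, bound $\A(S(a))\geq\tfrac{R_0}{2}\,\textrm{length}(a)$, and sum. The disjointness argument is also the same in substance (the paper picks a point in two sausages and shows its nearest boundary point would have to lie strictly inside another ball; you phrase this as the unique-nearest-point characterization on $S^{\circ}(a)$), and your inclusion $S(a)\subset a\oplus\bar B(0,R_0)\subset\Delta\oplus\bar B(0,R_0)$ is a marginally cleaner way to land in $\Delta\oplus B(0,R_0)$ than the paper's detour through $\A(\bar\o)$.
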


\begin{proof}

The boundary of $\bar \o$ viewed through $\Delta$ corresponds to a finite union of arcs, say $(a_i)_{1\le i \le n}$. For each arc $a_i$, coming from the ball $B(x_i,R_i)$, we consider the circular strip $S(a_i)$ of width $R_{0}$ defined by
$$
S(a_i) = \left\{ x \in \R^{2} ; \; \exists x' \in a_i \mbox{ s.t. }\begin{array}{c}
x = x' + \mu (x_i-x') \, \mbox{ with } \, \mu > 0 \\
\mbox{ and } \; d(x,x')< R_{0} 
\end{array}
\right\} ~.
$$
Let us notice that the sets $(S(a_i))_{1\le i \le n}$ are disjoint. Indeed, let suppose that there exists $x\in S(a_i)\cap S(a_j)$ for some $i\neq j$. Without restriction, we can assume that the distance between $x$ and $a_i$ is smaller than or equal to the distance between $x$ and $a_j$. Let $y$ be the point on $a_i$ such that this distance is equal to $|y-x|$. Then, $y$ has to be strictly included in the ball $B(x_j,R_j)$ which contradicts the fact that $y$ is on the boundary of $\bar \o$.\\
This allows to compare the sum of the areas of $(S(a_i))_{1\le i \le n}$ with $\A(\bar\o)$:
\begin{eqnarray*}
\LL_\Delta(\bar\o) =  \sum_{i=1}^n \textrm{length}(a_i) & \leq & \frac{2}{R_0} \sum_{i=1}^n \S(a_i) \\
& \leq & \frac{2}{R_0} \A(\bar\o) \\
& \leq & \frac{2 \A(\Delta\oplus B(0,R_0))}{R_0} ~.
\end{eqnarray*}
\end{proof}

\begin{lemme}
\label{Borneenergie}
Let $\Delta$ be a bounded subset of $\R^{2}$, $\o$ be a configuration on $\Delta$ and $(x,R)$ be an element of $\Delta\times[R_0,R_1]$. Let us denote by $\A((x,R),\o)$ the area variation when the ball $\bar{B}(x,R)$ is adding to the configuration $\bar{\o}$:
$$
\A((x,R),\o) = \A((x,R)\cup\o) - \A(\o) ~.
$$
In the same way, we consider the perimeter variation $\LL((x,R),\o)$ and the connected component number variation $\mathcal{N}_{cc}((x,R),\o)$. The following inequalities hold.
\begin{equation}
\label{diffaire}
0 \leq \A((x,R),\o) \leq \pi R_{1}^{2} ~.
\end{equation}
\begin{equation}
\label{diffperimetre}
- \frac{2 \pi (R_{1}+R_{0})^{2}}{R_{0}} \leq \LL((x,R),\o) \leq 2 \pi R_{1} ~.
\end{equation}
\begin{equation}
\label{diffcc}
- \pi \left( 1 + \frac{R_{1}}{R_{0}} \right) \leq \mathcal{N}_{cc}((x,R),\o) \leq 1 ~.
\end{equation}
\end{lemme}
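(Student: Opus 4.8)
The proof splits into the six inequalities (\ref{diffaire})--(\ref{diffcc}); three of them are essentially immediate, two more reduce to the circular–strip estimate already used in Lemma~\ref{BornePerimetre}, and the only substantial point is the lower bound on the connected–component variation, so that is where I would concentrate the effort.

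\emph{The immediate bounds.} Since $\A$, $\LL$ and $\x$ are additive, $\A(\bar{\o}\cup\bar{B}(x,R))=\A(\bar{\o})+\pi R^{2}-\A(\bar{\o}\cap\bar{B}(x,R))$, and as $0\le\A(\bar{\o}\cap\bar{B}(x,R))\le\pi R^{2}\le\pi R_{1}^{2}$ this gives (\ref{diffaire}) at once. Likewise subadditivity of the perimeter, $\LL(A\cup B)=\LL(A)+\LL(B)-\LL(A\cap B)\le\LL(A)+\LL(B)$ (because $\LL(A\cap B)\ge0$), yields $\LL((x,R),\o)\le\LL(\bar{B}(x,R))=2\pi R\le2\pi R_{1}$, the upper bound in (\ref{diffperimetre}). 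Finally, adding the single connected set $\bar{B}(x,R)$ either creates exactly one new component (when $\bar{B}(x,R)\cap\bar{\o}=\emptyset$) or glues several existing ones together, so it can raise the number of connected components by at most one: this is the upper bound in (\ref{diffcc}).

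\emph{Lower bound on the perimeter.} When $\bar{B}(x,R)$ is added, the pieces of $\partial\bar{\o}$ that stop lying on the boundary of the union are exactly those contained in the open ball $B(x,R)$. Write this set as a finite union of arcs $a_{1},\dots,a_{n}$, where $a_{j}$ is carried by a generating ball $\bar{B}(x_{j},R_{j})$ with $R_{j}\ge R_{0}$, and attach to $a_{j}$ the circular strip $S(a_{j})$ of width $R_{0}$ pointing toward $x_{j}$, exactly as in the proof of Lemma~\ref{BornePerimetre}. These strips are sub‑strips of the pairwise disjoint strips built there, hence pairwise disjoint; each satisfies $\A(S(a_{j}))\ge\tfrac{R_{0}}{2}\,\textrm{length}(a_{j})$ and, the arc $a_{j}$ lying in $B(x,R)$, is contained in $\bar{\o}\cap\bar{B}(x,R+R_{0})$. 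Summing,
$$
\textrm{length}\big(\partial\bar{\o}\cap B(x,R)\big)=\sum_{j=1}^{n}\textrm{length}(a_{j})\le\frac{2}{R_{0}}\,\A\big(\bar{\o}\cap\bar{B}(x,R+R_{0})\big)\le\frac{2\pi(R+R_{0})^{2}}{R_{0}}\le\frac{2\pi(R_{1}+R_{0})^{2}}{R_{0}}~.
$$
Since $\LL(\bar{\o}\cup\bar{B}(x,R))\ge\LL(\bar{\o})-\textrm{length}(\partial\bar{\o}\cap B(x,R))$, this is the lower bound in (\ref{diffperimetre}). (Alternatively one may simply invoke Lemma~\ref{BornePerimetre} with $\Delta=\bar{B}(x,R)$.)

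\emph{Lower bound on $\mathcal{N}_{cc}$ — the main point.} Here $\mathcal{N}_{cc}((x,R),\o)=1-k$, where $k$ is the number of connected components of $\bar{\o}$ meeting $\bar{B}(x,R)$, these being precisely the components amalgamated into one. So everything rests on an upper bound for $k$ that is uniform in $\o$, $x$ and $R$. Each component $C_{j}$ met by $\bar{B}(x,R)$ contains a generating ball; letting $p_{j}$ be the point of $C_{j}$ nearest to $x$ and sliding a radius‑$R_{0}$ disc inside that generating ball until it reaches $p_{j}$, we obtain a disc $\bar{B}(y_{j},R_{0})\subset C_{j}$ with $|x-y_{j}|\le R+R_{0}$. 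The discs $\bar{B}(y_{1},R_{0}),\dots,\bar{B}(y_{k},R_{0})$ are pairwise disjoint (distinct components) and all contained in $\bar{B}(x,R+2R_{0})$, so comparing areas gives immediately $k\,\pi R_{0}^{2}\le\pi(R+2R_{0})^{2}$, hence a bound on $k$ depending only on $R_{1}/R_{0}$ — which already suffices everywhere it is used afterwards. Reaching the announced constant $\pi(1+R_{1}/R_{0})$ requires pushing this further with a careful planar‑packing/angular estimate on how many pairwise non‑overlapping discs of radius $\ge R_{0}$ can simultaneously meet a given disc of radius $\le R_{1}$ (each such disc being seen from $x$ under a half‑angle at least $\arcsin(R_{0}/(R+R_{0}))\ge R_{0}/(R+R_{0})$, with at most one exceptional disc close to $x$), and in handling cleanly the components of $\bar{\o}$ lying entirely inside $\bar{B}(x,R)$. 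That geometric bookkeeping is the only genuinely delicate step; the other five inequalities are routine.
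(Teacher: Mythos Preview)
Your treatment of the area bound, the two upper bounds, and the perimeter lower bound is exactly the paper's: the paper declares (\ref{diffaire}) and the upper bounds in (\ref{diffperimetre}), (\ref{diffcc}) obvious, and obtains the lower bound in (\ref{diffperimetre}) by applying Lemma~\ref{BornePerimetre} with $\Delta=\bar B(x,R)$, just as you suggest parenthetically.

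Where you diverge is in your assessment of the $\mathcal{N}_{cc}$ lower bound. You call it ``the only genuinely delicate step'' and first give an area-packing argument yielding the weaker constant $(R_1+2R_0)^2/R_0^2$, then sketch (without completing) an angular argument to reach the stated $\pi(1+R_1/R_0)$. The paper does not regard this as delicate at all: it simply asserts that the number of pairwise non-overlapping balls of radius $\ge R_0$ that meet $\bar B(x,R)$ is at most $\dfrac{2\pi(R_1+R_0)}{2R_0}=\pi(1+R_1/R_0)$, and stops there. This is precisely the angular/perimeter heuristic you outline --- each such ball occupies an angular sector of width at least $2R_0/(R+R_0)$ as seen from $x$ --- stated as a one-liner. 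So your proposal is correct, but you are being more scrupulous than the paper; the exact constant plays no role downstream (only uniform boundedness matters), and your area argument already delivers that.
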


\begin{proof}
Inequalities (\ref{diffaire}), upperbounds of (\ref{diffperimetre}) and (\ref{diffcc}) are obvious. The border length of $\bar{\o}$ which is lost when the ball $\bar{B}(x,R)$ is adding can be interpreted as the perimeter of $\bar\o$ viewed through $\bar{B}(x,R)$ , i.e. as $\LL_{\bar{B}(x,R)}(\bar\o)$. Thanks to Lemma \ref{BornePerimetre}, it is smaller than
$$
\frac{2 \A(\bar{B}(x,R)\oplus B(0,R_0))}{R_0} \leq \frac{2 \pi (R_{1}+R_{0})^{2}}{R_{0}} ~.
$$
This gives the lowerbound of (\ref{diffperimetre}). It remains to lowerbound $\mathcal{N}_{cc}((x,R),\o)$. For that purpose, the number of deleted connected components when $\bar{B}(x,R)$ is adding to $\bar\o$, is smaller than the number of non-overlapping balls which overlap $\bar{B}(x,R)$. This number is at most
$$
\frac{2 \pi (R_{1}+R_{0})}{2 R_{0}} ~.
$$
\end{proof}

\begin{lemme}
\label{DistTrou1}
Let $\mathcal{C}$ be a connected component of $\bar{\o}_{\Delta}$ and $T$ be a hole of $\mathcal{C}$. Any point $x\in\R^{2}$ such that $x\notin\mathcal{C}$ and $x\notin T$ satisfies
$$
d(x,T)^{2} \geq d(x,\mathcal{C})^{2} + 2 d(x,\mathcal{C}) R_{0} ~.
$$
\end{lemme}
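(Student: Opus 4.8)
The plan is to bound $|x-c|$ from below for every $c\in T$ by exhibiting, along the segment $[x,c]$, a point lying on the ``far hemisphere'' of some configuration ball contained in $\mathcal{C}$, and then to invoke the elementary fact that the distance from $x$ to the far hemisphere of a ball of radius $R_k$ whose nearest point to $x$ is at distance $a$ is at least $\sqrt{a^2+2aR_k}$. Set $a:=d(x,\mathcal{C})$; since $\mathcal{C}$ is closed and $x\notin\mathcal{C}$ we have $a>0$, and because $d(x,T)=\inf_{c\in T}|x-c|$ it suffices to prove $|x-c|^2\ge a^2+2aR_0$ for an arbitrary $c\in T$. As $T$ is a connected component of the open set $\mathcal{C}^c$ while $x\in\mathcal{C}^c\setminus T$, every continuous path from $x$ to $c$ — in particular the segment $[x,c]$ — must meet $\mathcal{C}$. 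Let $s_c:=\sup\{s\in[0,1]:x+s(c-x)\in\mathcal{C}\}$; since $\mathcal{C}$ is closed and neither endpoint lies in $\mathcal{C}$, we get $s_c\in(0,1)$, $w:=x+s_c(c-x)\in\mathcal{C}$, and by maximality the half-open segment $(w,c]$ avoids $\mathcal{C}$; being connected and containing $c$ it lies in $T$, so $w\in\partial\mathcal{C}$.

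Next I would locate the blocking ball. Since $\bar{\o}_\Delta$ is a finite union of closed balls with radii in $[R_0,R_1]$, its boundary lies in the union of the corresponding spheres, and (each component of $\bar{\o}_\Delta$ being clopen in $\bar{\o}_\Delta$) $\partial\mathcal{C}\subseteq\partial\bar{\o}_\Delta$; hence $w\in\partial\bar{B}(x_k,R_k)$ for some ball of the configuration, so $|w-x_k|=R_k\ge R_0$, and $\bar{B}(x_k,R_k)\subseteq\mathcal{C}$ because it is connected and meets $\mathcal{C}$ at $w$. Then $x\notin\bar{B}(x_k,R_k)$, whence $|x-x_k|-R_k=d\big(x,\bar{B}(x_k,R_k)\big)\ge d(x,\mathcal{C})=a$, i.e. $|x-x_k|\ge a+R_k$. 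For the crucial inequality $(w-x)\cdot(w-x_k)\ge0$: the points $x+s(c-x)$ lie outside $\bar{B}(x_k,R_k)$ for $s$ slightly above $s_c$, so the (smooth, quadratic) function $s\mapsto|x+s(c-x)-x_k|^2$, which equals $R_k^2$ at $s_c$, has non-negative derivative there, i.e. $(w-x_k)\cdot(c-x)\ge0$; since $c-x=s_c^{-1}(w-x)$ with $s_c>0$, this yields $(w-x_k)\cdot(w-x)\ge0$ — geometrically, $w$ lies on the hemisphere of $\bar{B}(x_k,R_k)$ turned away from $x$.

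Finally this is turned into a length estimate: from $(w-x)\cdot(w-x_k)\ge0$,
\[
|x-x_k|^2=|x-w|^2+R_k^2-2(w-x)\cdot(w-x_k)\le|x-w|^2+R_k^2,
\]
so $|x-w|^2\ge|x-x_k|^2-R_k^2\ge(a+R_k)^2-R_k^2=a^2+2aR_k\ge a^2+2aR_0$; and since $w$ lies strictly between $x$ and $c$, $|x-c|\ge|x-w|$, hence $|x-c|^2\ge a^2+2aR_0$. Taking the infimum over $c\in T$ gives $d(x,T)^2\ge d(x,\mathcal{C})^2+2\,d(x,\mathcal{C})\,R_0$. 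The delicate point, and the step I expect to need the most care, is the passage from ``$w$ is the last point of $[x,c]$ lying in $\mathcal{C}$'' to ``$w$ sits on a configuration sphere bounding a ball of $\mathcal{C}$, on its hemisphere facing away from $x$''; it is precisely the choice of the \emph{last} exit point (so the segment leaves that ball outward), the radius bound $R_k\ge R_0$, and the connectedness of $\mathcal{C}$ that make it go through.
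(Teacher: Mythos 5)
Your argument is correct, and it arrives at the same Pythagorean estimate as the paper: locate a configuration ball $\bar B(x_k,R_k)\subseteq\mathcal{C}$ and a point $w$ on its sphere with $(w-x)\cdot(w-x_k)\ge 0$, $|x-x_k|\ge d(x,\mathcal{C})+R_k$, then $|x-w|^2\ge|x-x_k|^2-R_k^2\ge d(x,\mathcal{C})^2+2d(x,\mathcal{C})R_0$. The difference is in how that blocking sphere is found. The paper takes $y$ to be the point of $\overline{T}$ closest to $x$, asserts that $y$ lies on the boundary of \emph{two} configuration balls, and argues the scalar-product inequality from there; that first assertion is a little too strong (the nearest point of a curvilinear hole can sit in the interior of a single arc), although the inequality $(z-y)\cdot(x-y)\ge 0$ still holds by the first-order optimality condition. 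You instead fix an arbitrary $c\in T$, follow the segment $[x,c]$ to its last exit $w$ from $\mathcal{C}$ (which exists because $x$ and $c$ lie in distinct components of the open set $\mathcal{C}^c$), and obtain $(w-x)\cdot(w-x_k)\ge 0$ from the fact that the segment leaves $\bar B(x_k,R_k)$ outward at $w$, then pass to the infimum over $c$. This buys a cleaner and more robust derivation of the scalar-product inequality, sidestepping the two-arcs claim, at the modest cost of a slightly longer topological setup (last exit, clopenness of $\mathcal{C}$ in $\bar\o_\Delta$, and $\partial\mathcal{C}\subseteq\partial\bar\o_\Delta$), all of which you justify correctly.
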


\begin{proof}

Let us consider a connected component $\mathcal{C}$, a hole $T$ and a point $x$ satisfying the assumptions of the lemma. Let $y$ be a point of the closure of $T$ such that $d(x,T)=|x-y|$. Necessarily, $y$ is on the boundary of two balls $B(z,R)$ and $B(z',R')$ of $\mathcal{C}$. Since $x$ belongs neither to $\mathcal{C}$ nor to $T$, at least one of $z-y$ or $z'-y$ has a nonnegative scalar product with $x-y$. Say $z-y$. Given $|x-z|$ and $|y-z|$, the distance $|x-y|$ is minimal when the vectors $z-y$ and $x-y$ are orthogonal. Hence, using $|x-z|\ge d(x,\mathcal{C})+R_0$ and $|y-z|\ge R_0$, it follows from Pythagoras Theorem that
$$
d(x,T)^2 \ge (d(x,\mathcal{C})+R_0)^2 - R_0^2 ~,
$$
which concludes the proof. 
\end{proof}

The following result is a straight consequence of Lemma \ref{DistTrou1}.

\begin{lemme}
\label{DistTrou2}
Let $\mathcal{C}$, $\mathcal{C}'$ be two connected components of $\bar{\o}_{\Delta}$. Let $\bar{B}(x,R)$ be a ball of $\mathcal{C}$ and $T'$ be a hole of $\mathcal{C}'$ which does not contain $\bar{B}(x,R)$. Then,
$$
d(x,T') \geq \sqrt{3} R_{0} ~.
$$
\end{lemme}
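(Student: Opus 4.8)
The plan is to apply Lemma \ref{DistTrou1} with a carefully chosen point $x$ and read off the claimed bound. First I would observe that since the ball $\bar B(x,R)$ belongs to $\mathcal{C}$, and $\mathcal{C}$ and $\mathcal{C}'$ are distinct connected components of $\bar{\o}_\Delta$, the center $x$ lies neither in $\mathcal{C}'$ (it is in $\mathcal{C}$, disjoint from $\mathcal{C}'$) nor in the hole $T'$ of $\mathcal{C}'$. Indeed, if $x$ were in $T'$, then the whole ball $\bar B(x,R)$ — which is part of $\mathcal{C}$ and has radius $R \geq R_0$ — would have to be contained in $T'$, because $T'$ is a connected component of the complement of $\bar{\o}_\Delta$ bounded by $\mathcal{C}'$, and a connected subset of $\bar{\o}_\Delta$ meeting $T'$ cannot escape it without crossing $\mathcal{C}'$; but this contradicts the hypothesis that $T'$ does not contain $\bar B(x,R)$. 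So $x$ satisfies the hypotheses of Lemma \ref{DistTrou1} with $\mathcal{C}$ replaced by $\mathcal{C}'$ and $T$ by $T'$.

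Next I would apply Lemma \ref{DistTrou1} to get
$$
d(x,T')^2 \geq d(x,\mathcal{C}')^2 + 2\, d(x,\mathcal{C}') R_0 .
$$
It remains to bound $d(x,\mathcal{C}')$ from below. Since $\bar B(x,R) \subset \mathcal{C}$ and $\mathcal{C} \cap \mathcal{C}' = \emptyset$, the ball $\bar B(x,R)$ does not meet $\mathcal{C}'$, hence $d(x,\mathcal{C}') \geq R \geq R_0$. Plugging $d(x,\mathcal{C}') \geq R_0$ into the displayed inequality (the right-hand side being increasing in $d(x,\mathcal{C}')$) yields
$$
d(x,T')^2 \geq R_0^2 + 2 R_0^2 = 3 R_0^2 ,
$$
which is exactly $d(x,T') \geq \sqrt{3}\, R_0$.

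The only genuinely delicate point — and the one I would spell out most carefully — is the topological claim that $x \notin T'$, i.e. that a ball of $\mathcal{C}$ not entirely contained in $T'$ cannot have its center in $T'$; this uses that $\bar B(x,R)$ is connected, lies in $\bar{\o}_\Delta$, and that $T'$ is a connected component of the open complement whose topological boundary is part of $\mathcal{C}'$, together with $\mathcal{C} \cap \mathcal{C}' = \emptyset$. Everything else is a one-line substitution into Lemma \ref{DistTrou1} and the trivial estimate $d(x,\mathcal{C}') \geq R_0$. I do not expect any computational obstacle.
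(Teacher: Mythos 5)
Your proof is correct and follows exactly the route the paper has in mind (the paper presents the lemma as a ``straight consequence'' of Lemma \ref{DistTrou1} without spelling out the details). The mechanical part is right: $\bar B(x,R)\subset\mathcal{C}$ and $\mathcal{C}\cap\mathcal{C}'=\emptyset$ give $d(x,\mathcal{C}')\geq R\geq R_0$, and substituting into $d(x,T')^2\geq d(x,\mathcal{C}')^2+2d(x,\mathcal{C}')R_0$ yields $3R_0^2$.

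One remark on the point you flag as delicate. As literally stated, $x\notin T'$ is immediate: $x\in\bar B(x,R)\subset\bar\omega_\Delta$ while $T'$ is a component of $\bar\omega_\Delta^{\,c}$, so they are disjoint and your phrase ``a connected subset of $\bar\omega_\Delta$ meeting $T'$'' describes an impossible configuration. But this trivial reading is not enough: the proof of Lemma \ref{DistTrou1} tacitly uses that the nearest boundary point $y\in\partial T$ lies on arcs of $\mathcal{C}$, which fails when $x$ sits in a \emph{nested} component inside the region enclosed by $T'$ (e.g.\ a single disk of $\mathcal{C}$ surrounded by the ring $\mathcal{C}'$ — there $d(x,T')$ can equal $R_0$). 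The hypothesis ``$T'$ does not contain $\bar B(x,R)$'' is meant in exactly this enclosed sense, and your connectedness argument — $\bar B(x,R)$ is connected, disjoint from $\mathcal{C}'\supset\partial T'$, hence entirely inside or entirely outside the enclosed region, and by hypothesis not inside — is precisely the non-trivial fact that justifies invoking Lemma \ref{DistTrou1}. So your instinct was right; I would only tighten the wording to make it clear you are ruling out $x$ being in the \emph{region enclosed by} $T'$, rather than the (vacuous) set-theoretic membership $x\in T'$.
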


\begin{lemme}
\label{DistTrou3}
Let $T$ and $T'$ be two holes respectively of two connected components $\mathcal{C}$ and $\mathcal{C}'$ of $\bar{\o}_{\Delta}$. If $T\not\subset T'$ and $T'\not\subset T$ then
$$
d(T,T') \geq 2 R_{0} ~.
$$
\end{lemme}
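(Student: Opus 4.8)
The plan is to prove the contrapositive: assuming $d(T,T')<2R_0$, I would show that one of the two holes is contained in the other. Throughout I read the statement with $\C\neq\C'$ (two \emph{distinct} components, so $\C\cap\C'=\emptyset$); the case $\C=\C'$ is handled by a simpler variant of the same argument.

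First I would normalise the minimising pair. Since $\overline T$ and $\overline{T'}$ are closed and at least one is bounded, there are $y\in\overline T$ and $y'\in\overline{T'}$ with $|y-y'|=d(T,T')$. A short chain of elementary topological facts then removes the degenerate positions: if $T\cap T'\neq\emptyset$, then either $T$ is disjoint from $\C'$, hence (being connected in $\Rd\setminus\C'$) $T\subset T'$, or $T$ meets $\C'$, hence $\C'\subset T$ and so $T'\subset T$; similarly $\C\cap T'\neq\emptyset$ forces $\C\subset T'$ and $T\subset T'$, and $\C'\cap T\neq\emptyset$ forces $T'\subset T$. In the remaining case $T\cap T'=\emptyset$, $\overline T\cap\C'=\emptyset$, $\overline{T'}\cap\C=\emptyset$, and a common point of $\overline T,\overline{T'}$ would lie in $\partial T\cap\partial T'\subset\C\cap\C'=\emptyset$; so $d:=|y-y'|>0$. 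Since an interior point of a hole cannot realise a positive distance, $y\in\partial T$ and $y'\in\partial T'$, so $y$ lies on the boundary of a ball $\bar B(z,R)\subset\C$ with $R_0\le R\le R_1$, and $\bar B(z,R)\not\subset T'$ (otherwise $y\in\overline{T'}$ and $d=0$); symmetrically $y'\in\partial\bar B(z',R')$ with $\bar B(z',R')\subset\C'$, $R_0\le R'\le R_1$, $\bar B(z',R')\not\subset T$. Each hole lies outside its ball: $\overline T\subset\{x:|x-z|\ge R\}$ and $\overline{T'}\subset\{x:|x-z'|\ge R'\}$, and $|z-z'|\ge R+R'\ge 2R_0$.

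Then I would feed in the auxiliary estimates. Lemma \ref{DistTrou2}, applied to the ball $\bar B(z',R')$ of $\C'$ and the hole $T$ of $\C$ (which does not contain it), gives $d(z',T)\ge\sqrt3\,R_0$, and symmetrically $d(z,T')\ge\sqrt3\,R_0$; Lemma \ref{DistTrou1}, together with $d(z',\C)\ge R'$ and $d(z,\C')\ge R$, sharpens these to $d(z',T)^2\ge R'^2+2R_0R'$ and $d(z,T')^2\ge R^2+2R_0R$. The core of the proof is then the geometric claim that the segment $[y,y']$ must cross a ball of radius $\ge R_0$ of $\C\cup\C'$ separating $T$ from $T'$; equivalently, a "pressing ball" of radius $R_0$ can be squeezed between $\partial T$ and $\partial T'$ exactly as in the proof of Lemma \ref{noholeH}, and its diameter $2R_0$ is the bound. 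Concretely, combining the inequalities above with the two facts that $y$ (resp.\ $y'$) is the point of $\overline T$ (resp.\ $\overline{T'}$) closest to the other hole and that $T,T'$ are \emph{enclosed} bounded components, I would show that $d<2R_0$ lets a point of $\overline T$ or of $\overline{T'}$ slip past the separating ball — contradicting the minimality of $d$, or else forcing $\C\subset T'$ or $\C'\subset T$, hence $T\subset T'$ or $T'\subset T$.

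The step I expect to be the main obstacle is precisely this last one. The estimates from Lemmas \ref{DistTrou1}--\ref{DistTrou2} by themselves only produce a lower bound of order $(\sqrt3-1)R_0$ for $d$, so the sharp constant $2R_0$ has to come from the \emph{enclosedness} of the holes and a careful "pressing ball" analysis à la Lemma \ref{noholeH}, controlling uniformly the position of the separating ball of $\C\cup\C'$. The accompanying bookkeeping — ruling out every mutual position of $T,T',\C,\C'$ (a hole surrounding a component, the two holes externally adjacent, boundary effects near $\partial\Delta$, and the case $\C=\C'$) — is routine but somewhat delicate, and I would isolate it at the beginning so the geometric estimate can be stated in a clean configuration.
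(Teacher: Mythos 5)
Your proposal has a genuine gap, and you are candid enough to flag it yourself: the estimate that is supposed to produce the constant $2R_0$ is never carried out. As you write, the inputs you draw from Lemmas \ref{DistTrou1} and \ref{DistTrou2} only give a lower bound of order $(\sqrt3-1)R_0$, and the promised ``pressing ball'' argument \`a la Lemma \ref{noholeH} --- which would have to supply the sharp bound --- is left as a program rather than a proof. The preliminary topological bookkeeping (normalising the minimising pair, ruling out degenerate mutual positions of $T$, $T'$, $\C$, $\C'$) is fine, but it is the easy part; the core of the lemma is exactly the step you defer.

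The idea you are missing is the one the paper exploits. The minimising point $x\in\overline T$ is not on a single arc of $\partial T$; it lies at a \emph{corner} of the hole, i.e.\ on the boundary of \emph{two} balls $B(z,R)$, $B(z',R')$ of $\C$, and similarly $y\in\overline{T'}$ sits on the boundary of two balls $B(w,r)$, $B(w',r')$ of $\C'$. (If $x$ were on a single arc, the direction $y-x$ would be the outward normal at $x$, i.e.\ $x$ would be between $z$ and $y$; pushing a little along that arc and using that another component of $\bar\o$ must separate the two holes shows one cannot be at an interior point of an arc in the extremal configuration.) With all four centers and radii in play, one then performs an extremal analysis over the radii and the relative positions of $z,z',w,w'$: the distance $|x-y|$ is minimised when $R=R'=r=r'=R_0$ and $z,z',w,w'$ are the vertices of a rhombus with side length $2R_0$. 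In that configuration $x$ and $y$ are the midpoints of two opposite sides, and the distance between midpoints of opposite sides of such a rhombus is exactly $2R_0$. Your observation that $y$ lies on the boundary of just one ball $\bar B(z,R)$ does not give enough structure to run this extremal argument, and the constraints $d(z',T)^2\ge R'^2+2R_0R'$, $|z-z'|\ge 2R_0$ that you extract are too weak to reach $2R_0$ on their own. So the proof as written does not close, and the missing ingredient is precisely the two-ball (corner) structure of the minimising points plus the rhombus extremal configuration.
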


\begin{proof}
Let $T$ and $T'$ be two holes satisfying the assumption of the lemma. We denote by $x$ and $y$ two points belonging respectively to the closure of $T$ and $T'$ such that $d(T,T')=|x-y|$. The point $x$ (respectively $y$) belongs to the boundary of two balls $B(z,R)$ and $B(z',R')$ of $\mathcal{C}$ (respectively $B(w,r)$ and $B(w',r')$ of $\mathcal{C}'$). An analysis, as in the proof of Lemma \ref{DistTrou1}, shows that the distance $|x-y|$ is minimal in the situation where $R=R'=r=r'=R_0$ and $z$, $z'$, $w$ and $w'$ form a parallelogram with length side $2R_0$. Then the points $x$ and $y$ are  at the middle of two opposite sides and the result follows.
\end{proof}

{\small \bibliographystyle{plain}
\bibliography{biblioGibbs}}

\end{document}